\documentclass[12pt]{amsart}

\usepackage{amsmath,amscd,amssymb,latexsym, mathrsfs}
\usepackage[normalem]{ulem} 
\usepackage{booktabs}
\usepackage{tikz}

\theoremstyle{plain}
\newtheorem{thm}{Theorem}[section] 
\newtheorem*{thm*}{Theorem}

\newtheorem*{leem*}{Lemma}
\newtheorem{cor}[thm]{Corollary}
\newtheorem*{cor*}{Corollary}
\newtheorem{prop}[thm]{Proposition}
\newtheorem*{prop*}{Proposition}

\newtheorem*{rmk}{Remark}

\theoremstyle{definition}

\newtheorem*{def*}{Definition}

\newtheorem*{ex*}{Example}

\newcommand{\ibe}{\begin{eqnarray}}
	\newcommand{\iee}{\end{eqnarray}}
\newcommand{\be}{\begin{eqnarray*}}
	\newcommand{\ee}{\end{eqnarray*}}

\numberwithin{equation}{section}
\numberwithin{thm}{section}

\makeatletter
\@namedef{subjclassname@2020}{%
	\textup{2020} Mathematics Subject Classification}
\makeatother

\newcommand{\cancel}[1]{\ifmmode\text{\sout{\ensuremath{#1}}}\else\sout{#1}\fi}

\begin{document}


\title[Moments of Partition Statistics]{Moments of Partition Statistics, Bell polynomials and Eisenstein-type Series}


\author[S.-Y. Kang]{Soon-Yi Kang}
\address{Department of Mathematics, Kangwon National University, Chuncheon, 200-701, Republic of Korea}
\email{sy2kang@kangwon.ac.kr}
	
\author[B. Kim]{Byungchan Kim}
\address{School of Natural Sciences, Seoul National University of Science and Technology, Seoul 01811, Republic of  Korea}
\email{bkim4@seoultech.ac.kr}
	
\author[C. Lee]{Chanhee Lee}
\address{Department of Mathematics, Kangwon National University, Chuncheon, 200-701, Republic of Korea}
\email{dxscf156@kangwon.ac.kr}


\makeatletter
\@namedef{subjclassname@2020}{\textup{2020} Mathematics Subject Classification}
\makeatother

\date{\today}
\subjclass[2020]{05A17, 11P81}
\keywords{Partition Statistics, Moments, Generating Function, Complete Bell polynomial, Eisenstein-type Series}


\begin{abstract}
We develop a systematic method to express generating functions for moments of combinatorial statistics in terms of partition traces.
We employ an algebraic approach based on the complete Bell polynomials and their inversion formula,
alongside an analytic approach via Fa\`a di Bruno's formula.
Our approach can be applied to a wide class of combinatorial statistics,
such as the largest part of an integer partition,
the partition crank and rank, and the unimodal sequence rank.
\end{abstract}

\maketitle
	
\section{Introduction}
A \emph{partition} of a positive integer $n$ is a nonincreasing sequence of positive integers
\[
	\lambda = (\lambda_1,\lambda_2,\dots,\lambda_m)
\]
such that
\[
	\lambda_1+\lambda_2+\cdots+\lambda_m = n.
\]
We write $\lambda \vdash n$ to indicate that $\lambda$ is a partition of $n$, and denote by $p(n)$ the number of partitions of $n$.

The generating function for the partition function is
\[
\sum_{n\ge0} p(n)q^n
=
\prod_{m\ge1}\frac{1}{1-q^m}
=
\frac{q^{1/24}}{\eta(\tau)},
\qquad
\left(q=e^{2\pi i\tau},\ \tau\in\mathbb H:=\{w\in\mathbb C:\text{Im}(w)>0\}\right),
\]
where
\[
\eta(\tau):=q^{1/24}\prod_{m\ge1}(1-q^m) 
\]
is the Dedekind eta function. Thus, the partition generating function is naturally connected with modular forms, since $\eta(\tau)$ is a fundamental modular object of weight $1/2$ (with multiplier), and eta-quotients play a central role in the theory.

To investigate the arithmetic and probabilistic properties of the partition function, several partition statistics have been introduced.  Two of the most prominent statistics on partitions are the \emph{rank} and the \emph{crank}. The rank of a partition, introduced by Dyson \cite{dyson} to give a combinatorial explanation of Ramanujan's partition congruences, is defined by
\[
\operatorname{rank}(\lambda):=\lambda_1-\ell(\lambda),
\]
where $\lambda_1$ is the largest part of $\lambda$ and $\ell(\lambda)$ is the number of parts of $\lambda$.

The crank, anticipated by Dyson and later defined by Andrews and Garvan \cite{AG1988}, is defined as follows. Let $\mu(\lambda)$ denote the number of ones in $\lambda$, and let $\nu(\lambda)$ denote the number of parts of $\lambda$ that are strictly larger than $\mu(\lambda)$. Then
\[
\operatorname{crank}(\lambda):=
\begin{cases}
\lambda_1, & \text{if } \mu(\lambda)=0,\\[6pt]
\nu(\lambda)-\mu(\lambda), & \text{if } \mu(\lambda)>0.
\end{cases}
\]

For $m\in\mathbb{Z}$ and $n\geq 0$, let $M(m,n)$ and $N(m,n)$ denote the number of partitions of $n$ with crank $m$ and rank $m$, respectively.  Then the corresponding two-variable crank generating function is given by \cite{AG1988, Garvan}
\begin{equation}
	C(\zeta; q) := \sum_{\substack{m \in \mathbb{Z} \\ n \geq 0}} M(m,n)\zeta^m q^n 
	= \frac{(q)_\infty}{(\zeta q)_\infty (\zeta^{-1} q)_\infty}, 
\label{eq:intro-crank-gf}
\end{equation}
and the rank generating function is given by \cite{atkin-swinner}
\begin{equation}
	R(\zeta;q) := \sum_{\substack{m \in \mathbb{Z} \\ n \geq 0}} N(m,n)\zeta^m q^n 
	= \sum_{n \geq 0} \frac{q^{n^2}}{(\zeta q)_n (\zeta^{-1} q)_n}, \label{eq:intro-rank-gf}
\end{equation}
where here and in the sequel, $(a)_n = \prod_{k=1}^{n} (1-aq^{k-1})$ for $n \in \mathbb{N}_0 \cup \{ \infty \}$. More precisely, viewed as functions of the elliptic variable $z$ and the modular variable $\tau$, with $\zeta=e^{2\pi i z}$ and $q=e^{2\pi i\tau}$, the crank generating function $C(\zeta;q)$ is a meromorphic Jacobi form of weight $\frac12$ and index $-\frac12$, whereas the rank generating function $R(\zeta;q)$ is a mock Jacobi form of weight $\frac12$ and index $-\frac32$.
At the specialization \(\zeta=-1\), the crank generating function reduces to
\[
C(-1;q)=\frac{(q)_\infty}{(-q)_\infty^2}
= q^{1/24}\frac{\eta(\tau)^3}{\eta(2\tau)^2},
\]
and hence \(q^{-1/24}C(-1;q)\) is a modular form, whereas
\[
R(-1;q)=\sum_{n\ge0}\frac{q^{n^2}}{(-q)_n^2}=f(q)
\]
is Ramanujan's third order mock theta function.  For further details on these automorphic properties, see \cite{BringmannMahlburgRhoades2014,Kang2009,Rhoades2013}.

The \(k\)-th moment generating functions for the crank and rank are defined by
\begin{equation}
C_k(q) := \sum_{n \geq 0} \sum_{m \in \mathbb{Z}} m^k M(m,n) q^n, \label{eq:intro-crank-moment}
\end{equation}
and
\begin{equation}
R_k(q) := \sum_{n \geq 0} \sum_{m \in \mathbb{Z}} m^k N(m,n) q^n. \label{eq:intro-rank-moment}
\end{equation}
Equivalently,
\begin{equation}
\left.\left(\zeta\frac{d}{d\zeta}\right)^k C(\zeta;q)\right|_{\zeta=1}=C_k(q),
\qquad
\left.\left(\zeta\frac{d}{d\zeta}\right)^k R(\zeta;q)\right|_{\zeta=1}=R_k(q). \label{eq:intro-moment-operator}
\end{equation}
Atkin and Garvan \cite{AtkinGarvan2003} showed that the crank moment generating functions are quasi-modular forms, whereas Bringmann, Garvan, and Mahlburg \cite{BringmannGarvanMahlburg2009} proved that the rank moment generating functions are quasi-mock modular forms. See also \cite{BringmannMahlburgRhoades2014,Rhoades2013} for interpretations of these moment generating functions in terms of Jacobi and mock Jacobi forms.
Moreover, taking the logarithmic derivative of the partition generating function naturally produces divisor sums: 
\[
	q\frac{d}{dq} \log \left( \frac{1}{(q)_\infty} \right)
	=
	\sum_{n\ge1}\sigma_1(n)q^n,
\]
where, for $j\ge0$,
\[
\sigma_j(n):=\sum_{d\mid n} d^j.
\]
These divisor sums appear in the Fourier expansions of Eisenstein series. For even integers $k\ge2$, the normalized Eisenstein series is defined by
\[
E_k(\tau):=1-\frac{2k}{B_k}\sum_{n\ge1}\sigma_{k-1}(n)q^n,
\]
where $B_k$ denotes the $k$-th Bernoulli number. 

However, unlike the Eisenstein series of weight at least $4$, which are modular forms and furnish the basic building blocks for spaces of modular forms on $\mathrm{SL}_2(\mathbb Z)$, the function $E_2$ is not itself a modular form on $\mathrm{SL}_2(\mathbb Z)$. This failure of modularity is corrected by adding a non-holomorphic term to form the completed Eisenstein series
\[
E_2^*(\tau):=E_2(\tau)-\frac{3}{\pi\,\text{Im}(\tau)},
\]
which transforms as a modular form of weight $2$ on $\mathrm{SL}_2(\mathbb Z)$. Surprisingly, the crank and rank moments are deeply connected with (mock) Eisenstein series.

Recently, Amdeberhan, Griffin, Ono, and Singh \cite[Theorem 1.2]{TracepartitionOno2025} proved that the even crank moments admit explicit expressions in terms of partition Eisenstein traces. To describe these, we write a partition in frequency notation as
\[
\lambda=(1^{m_1},2^{m_2},\dots,k^{m_k})\vdash k,
\]
where \(m_j\) denotes the multiplicity of the part \(j\), and
\[
\ell(\lambda)=m_1+m_2+\cdots+m_k.
\]
For each partition $\lambda=(1^{m_1},2^{m_2},\ldots,k^{m_k})$, define the monomial
\[
X_\lambda:=\prod_{j=1}^k X_j^{m_j}.
\]
Then, for a function \(\phi:\mathcal P\to\mathbb C\) on the set \(\mathcal P\) of all partitions, the \emph{partition trace} associated with \(\phi\) is defined by
\begin{equation}
\mathrm{Tr}_k(\phi;X_1,\ldots,X_k):=\sum_{\lambda\vdash k}\phi(\lambda)X_\lambda.
\label{eq:intro-partition-trace}
\end{equation}

Furthermore, \emph{partition Eisenstein traces} are the partition traces associated with the sequence \(G=\{G_k\}_{k\geq 1}\), where
\begin{equation}
G_{2k}(\tau):=-\frac{B_{2k}}{4k}E_{2k}(\tau)
=-\frac{B_{2k}}{4k}+\sum_{n\geq 1}\sigma_{2k-1}(n)q^n,
\label{eq:intro-Eisenstein}
\end{equation}
and \(G_{2k-1}(\tau)=0\).

We are now ready to state the crank moment formula in terms of partition Eisenstein traces. We present it in the form given in \cite[Theorem~1.1]{bringmannMock}, which is equivalent to \cite[Theorem~1.2]{TracepartitionOno2025}.

\begin{thm}\cite[Theorem~1.2]{TracepartitionOno2025}; see also \cite[Theorem~1.1]{bringmannMock}
\begin{equation}
\sum_{k \geq 0} C_k(q)\frac{z^k}{k!}
=
\frac{2\sinh(z/2)}{z(q)_\infty}
\sum_{k\geq 0}\mathrm{Tr}_k(\phi;G_1,G_2,\ldots,G_k)\,z^k,
\label{eq:intro-crank-trace}
\end{equation}
where
\[
\phi(\lambda):=\prod_{j\geq 1}\frac{2^{m_j}}{m_j!\,j!^{m_j}}
\qquad
\text{for }
\lambda=(1^{m_1},2^{m_2},\ldots,k^{m_k})\vdash k.
\]
\end{thm}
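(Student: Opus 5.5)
Setting $\zeta=e^{z}$ turns the operator $\zeta\frac{d}{d\zeta}$ into $\frac{d}{dz}$. By \eqref{eq:intro-moment-operator}, the left-hand side of \eqref{eq:intro-crank-trace} is the Taylor expansion of $C(e^{z};q)$ at $z=0$. The plan is therefore to expand $C(e^z;q)$ as a power series in $z$ directly from the product \eqref{eq:intro-crank-gf}. We then recognize the result as an exponential of a series whose coefficients are the $G_k$, and convert that exponential into partition traces.

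First, split off the factor coming from $m=1$ in the denominator product:
\[
C(e^z;q)=\frac{(q)_\infty}{(1-e^{z}q^0)\cdots}
\]
would be wrong, since the products start at $m=1$. Instead we write
\[
C(e^z;q)=\frac{1}{(q)_\infty}\prod_{m\ge1}\frac{(1-q^m)^2}{(1-e^zq^m)(1-e^{-z}q^m)}.
\]
Taking logarithms and expanding $-\log(1-x)=\sum_{r\ge1}x^r/r$ gives
\[
\log\prod_{m\ge1}\frac{(1-q^m)^2}{(1-e^zq^m)(1-e^{-z}q^m)}
=\sum_{m,r\ge1}\frac{q^{mr}}{r}\bigl(e^{rz}+e^{-rz}-2\bigr)
=\sum_{m,r\ge1}\frac{q^{mr}}{r}\sum_{k\ge1}\frac{2r^{2k}z^{2k}}{(2k)!}.
\]
Collecting by $n=mr$, the coefficient of $z^{2k}$ equals $\frac{2}{(2k)!}\sum_{n\ge1}\sigma_{2k-1}(n)q^n$. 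This is $\frac{2}{(2k)!}\bigl(G_{2k}+\frac{B_{2k}}{4k}\bigr)$.

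Next we account for the constant terms. The Bernoulli constants must combine into $\log\frac{2\sinh(z/2)}{z}$. Indeed, the classical expansion
\[
\log\frac{\sinh(z/2)}{z/2}=\sum_{k\ge1}\frac{B_{2k}}{2k\,(2k)!}z^{2k}
\]
matches $\sum_k \frac{2}{(2k)!}\cdot\frac{B_{2k}}{4k}z^{2k}$ exactly. (We verify it by differentiating: $\frac12\coth(z/2)-\frac1z=\sum_{k}\frac{B_{2k}}{(2k)!}z^{2k-1}$.) Since $G_{\text{odd}}=0$, this yields
\[
C(e^z;q)=\frac{2\sinh(z/2)}{z\,(q)_\infty}\exp\Bigl(\sum_{j\ge1}\frac{2G_j}{j!}z^j\Bigr).
\]

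Finally, expand the exponential:
\[
\exp\Bigl(\sum_j a_jz^j\Bigr)=\prod_j\sum_{m_j\ge0}\frac{a_j^{m_j}z^{jm_j}}{m_j!},
\qquad a_j=\frac{2G_j}{j!}.
\]
The coefficient of $z^k$ is then the sum over $\lambda=(1^{m_1},\dots)\vdash k$ of $\prod_j\frac{2^{m_j}G_j^{m_j}}{m_j!\,j!^{m_j}}$, which is $\mathrm{Tr}_k(\phi;G_1,\dots,G_k)$. This is the complete Bell polynomial identity in disguise.

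The only genuinely delicate step is the Bernoulli bookkeeping in the second paragraph, together with justifying the interchange of the sums over $m,r,k$. For the latter, for $|z|$ small, $|q|<1$, and $|e^{\pm z}q|<1$, everything converges absolutely, so the identity holds as formal power series in $z$ with $q$-series coefficients.
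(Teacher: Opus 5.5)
Your proof is correct and follows essentially the route the paper uses in its crank subsection (the statement itself is only cited there). You expand $\log C$ into the divisor sums $D_{2k}=G_{2k}+\frac{B_{2k}}{4k}$, recognize the Bernoulli constants as the Taylor coefficients of $\log\bigl(\sin(\pi z)/(\pi z)\bigr)$, which is your $\log\frac{2\sinh(z/2)}{z}$ after $2\pi i z\mapsto z$, obtain \eqref{eq:intro-crank-exp}, and read off the trace via the complete Bell polynomial generating function \eqref{eq:com-bell}. The only blemish is the stray ``false start'' about splitting off an $m=1$ factor, which should simply be deleted.
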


Motivated by this identity, Bringmann, Pandey, and van Ittersum established in \cite[Lemma~2.2]{bringmannMock} the following exponential expansion of the crank generating function in terms of Eisenstein series,
\begin{equation}
C(\zeta;q)
=
\frac{\sin(\pi z)}{\pi z(q)_\infty}
\exp\left(
2\sum_{k\geq 2}G_k(\tau)\frac{(2\pi i z)^k}{k!}
\right).
\label{eq:intro-crank-exp}
\end{equation}
They then defined a family of mock Eisenstein series \(f_k\) through the expansion
\begin{equation}
R(\zeta;q)
:=
\frac{\sin(\pi z)}{\pi z(q)_\infty}
\exp\left(2\sum_{k \geq 1} f_k(\tau) \frac{(2\pi i z)^k}{k!}\right),
\label{eq:intro-mock-eis}
\end{equation}
and proved the following analogous identity for the rank moments.

\begin{thm}\cite[Theorem~1.2]{bringmannMock}
\begin{equation}
	\sum_{k \geq 0} R_k (q)\frac{z^k}{k!}
	=
	\frac{2 \sinh(z/2)}{z (q)_\infty}
	\sum_{k \geq 0} \mathrm{Tr}_k(\phi; f_1,f_2,\ldots,f_k)\, z^k.
\label{eq:intro-rank-trace}
\end{equation}
\end{thm}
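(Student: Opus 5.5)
The plan is to treat the defining expansion \eqref{eq:intro-mock-eis} as a generating-function identity in the variable $z$ and then change variables. By \eqref{eq:intro-moment-operator}, the left-hand side of \eqref{eq:intro-rank-trace} is obtained from $R(\zeta;q)$ by writing $\zeta=e^{w}$. Then $\zeta\frac{d}{d\zeta}=\frac{d}{dw}$, and so
\[
\sum_{k\ge0}R_k(q)\frac{w^k}{k!}=R(e^{w};q).
\]
Both sides are formal power series in $w$ with $q$-series coefficients; this is legitimate because each coefficient of $q^n$ is a finite sum over partitions of $n$. With $\zeta=e^{2\pi i z}$ we have $w=2\pi i z$. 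Hence $\pi z=w/(2i)$, and
\[
\frac{\sin(\pi z)}{\pi z}=\frac{\sin(w/(2i))}{w/(2i)}=\frac{\sinh(w/2)}{w/2}=\frac{2\sinh(w/2)}{w}.
\]
Substituting into \eqref{eq:intro-mock-eis}, I will obtain
\[
\sum_{k\ge0}R_k(q)\frac{w^k}{k!}=\frac{2\sinh(w/2)}{w\,(q)_\infty}\exp\Bigl(2\sum_{k\ge1}f_k(\tau)\frac{w^k}{k!}\Bigr).
\]

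It then remains to show the purely formal identity
\[
\exp\Bigl(\sum_{j\ge1}\frac{2f_j}{j!}w^j\Bigr)=\sum_{k\ge0}\mathrm{Tr}_k(\phi;f_1,\dots,f_k)\,w^k,
\]
for arbitrary coefficients $f_j$. To prove it, write the exponential as the product $\prod_{j\ge1}\exp\bigl(\frac{2f_j}{j!}w^j\bigr)$. Expand each factor as $\sum_{m_j\ge0}\frac{(2f_j)^{m_j}}{m_j!\,j!^{m_j}}w^{jm_j}$. Collecting the terms with $\sum_j jm_j=k$ amounts to summing over $\lambda=(1^{m_1},\dots,k^{m_k})\vdash k$. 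The coefficient attached to each $\lambda$ is $\prod_j \frac{2^{m_j}}{m_j!\,j!^{m_j}}f_j^{m_j}=\phi(\lambda)f_\lambda$, which matches \eqref{eq:intro-partition-trace} exactly. This is the exponential formula, equivalently the generating function of the complete Bell polynomials evaluated at $x_j=2f_j$. Renaming $w$ as $z$ then gives \eqref{eq:intro-rank-trace}.

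The only delicate point is justifying that \eqref{eq:intro-mock-eis} really defines the $f_k$ and can be manipulated as a power series in $z$. The function $R(\zeta;q)(q)_\infty\pi z/\sin(\pi z)$ is a power series in $z$ with constant term $1$, since $R(1;q)=1/(q)_\infty$. Its logarithm is therefore a well-defined formal series, and the coefficients $f_k$ exist and are unique. The argument is consistent with the crank case: applying the same computation to \eqref{eq:intro-crank-exp} recovers \eqref{eq:intro-crank-trace}, with $f_1$ replaced by $G_1=0$.
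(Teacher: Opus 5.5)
Your proof is correct and uses exactly the mechanism the paper points to. The paper does not reprove this cited result, but it observes that once the prefactor $\sin(\pi z)/(\pi z (q)_\infty)$ is split off, the exponential in \eqref{eq:intro-mock-eis} is the complete Bell generating function \eqref{eq:com-bell} with $X_j=2f_j$; under the rescaling $2\pi i z\mapsto z$ that prefactor becomes $2\sinh(z/2)/(z(q)_\infty)$, as you computed, and the coefficient of $z^k$ is $B_k(2f_1,\ldots,2f_k)/k!=\mathrm{Tr}_k(\phi;f_1,\ldots,f_k)$, matching your exponential-formula computation. The one difference from the paper's own treatment (first proof of Theorem~\ref{thm:main}) is that there the sine factor is absorbed into the exponent via $\log(\sin z/z)=\sum_{n\ge 2}\frac{B_n}{n\,n!}(2iz)^n$, which yields the prefactor-free version in Corollary~\ref{cor:ptn_rank}, with shifted arguments $f_{2j}+B_{2j}/(4j)$, rather than the form stated here.
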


Recently, Matsusaka \cite{matsusaka2025applications} showed that several partition-theoretic generating functions, including the theta quotients from Ramanujan's lost notebook, MacMahon's partition functions, and reciprocal sums of parts in partitions, can be expressed uniformly in terms of complete Bell polynomials, or equivalently partition traces. From this perspective, the exponential factors in \eqref{eq:intro-crank-exp} and \eqref{eq:intro-mock-eis}, apart from the common prefactor \(\sin(\pi z)/(\pi z(q)_\infty)\), are precisely of the form of the generating function for complete Bell polynomials given in \eqref{eq:com-bell}. This suggests that Matsusaka's approach, based on Fa\`a di Bruno's formula and the associated Bell polynomials, also provides a natural way to understand the partition trace identities for the crank and rank generating functions and their moments.

Motivated by this observation, we study in this paper the relations among crank moments, rank moments, Eisenstein series, and mock Eisenstein series in a unified manner.
More precisely, we show not only that crank and rank moments can be expressed in terms of Eisenstein series and mock Eisenstein series, but also that these Eisenstein-type series can be recovered explicitly from the moment functions as partition traces. In particular, the mock Eisenstein series defined implicitly through the exponential expansion of the rank generating function, admit explicit descriptions in terms of rank moments. As a consequence, the functions \((q)_\infty C_{2k}(q)\) generate the same
algebra as the Eisenstein series, namely the algebra of quasi-modular forms.
On the rank side, the algebra generated by \((q)_\infty R_{2k}(q)\) together
with the Eisenstein series agrees with the algebra generated by the mock
Eisenstein series together with the Eisenstein series.

Before stating our main result, we need to introduce some terminology. Suppose that the statistic \(s(\lambda) \)  is an integer-valued function on combinatorial objects such as integer partitions. We let $S(n)$ be the number of combinatorial objects of weight $n$ and $S(m,n)$ denote  the number of objects $\lambda$ of weight $n$ with $s(\lambda)=m$. We further assume that $s(\lambda)$ is symmetric,  in the sense that 
\begin{equation}\label{eq:stat_symmetry}
S(m,n) = S(-m,n).
\end{equation}
Let $G_S (\zeta; q)$
 be the generating function for objects with statistic \(s\), i.e.,
\[
	G_S (\zeta ; q) = \sum_{ \substack{n \ge 0 \\ m \in \mathbb{Z} }} S(m,n) \zeta^m q^n.
\] 
Assume that
\begin{equation}
	G_S (\zeta;q)
	=
	\frac{\sin(\pi z)}{\pi z} G_S (1,q)
	\exp\left(
	2\sum_{n\geq 2}H_n(\tau)
	\frac{(2\pi iz)^n}{n!}
	\right).
\label{eq:intro-H-exp}
\end{equation}

Let $M_r (q)$ be the $r$-th moment generating function for $S(m,n)$, i.e.,
\[
	M_r (q) := \sum_{n \ge 0} \left( \sum_{m \in \mathbb{Z}} m^r S(m,n) \right) q^n
\]
and normalized $k$-th moment
\[
	\mathcal{M}_r (q) = \frac{M_r(q)}{G_S (1;q)}.
\]
Now we are ready to state our main result.

\begin{thm} \label{thm:main}
For positive integers $k$, let $\mathcal{M}_{2k}(q)$ 
be the normalized $2k$-th moment generating function 
for the statistic with the symmetry \eqref{eq:stat_symmetry}. Then,
\begin{align*}
	\mathcal{M}_{2k} (q) &=  \mathrm{Tr}_{k}\left( \phi_{M}; H_2 + \frac{B_2}{4}, H_4 + \frac{B_4}{8},\ldots, H_{2k} + \frac{B_{2k}}{4k} \right), \\
	H_{2k} &= - \frac{B_{2k}}{4k} + \mathrm{Tr}_{k}\left(\phi_{S} ; \mathcal{M}_2(q), \mathcal{M}_4(q), \ldots, \mathcal{M}_{2k}(q)\right),
\end{align*}
where, for $\lambda=(1^{m_1},2^{m_2},\ldots,k^{m_k})\vdash k$,
\begin{align}
	\label{eq:phi_M}
	\phi_{M}(\lambda)
	&=
	(2k)!\prod_{j=1}^k
	\frac{2^{m_j}}{m_j! ((2j)!)^{m_j}}, \\
	\label{eq:phi_S}
	\phi_{S}(\lambda)
	&=
	\frac{(2k)!}{2}
	(-1)^{\ell(\lambda)-1}(\ell(\lambda)-1)! 
	\prod_{j=1}^k \frac{1}{m_j!((2j)!)^{m_j}}.
\end{align}
\end{thm}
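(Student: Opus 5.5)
Set $\zeta=e^{2\pi i z}$ and write $w=2\pi i z$, so that $\zeta\frac{d}{d\zeta}=\frac{d}{dw}$. Then
\[
\sum_{r\ge0}\mathcal M_r(q)\frac{w^r}{r!}=\frac{G_S(e^{w};q)}{G_S(1;q)}.
\]
By the symmetry \eqref{eq:stat_symmetry} the odd moments vanish, so the left side is an even function of $w$. On the right, use \eqref{eq:intro-H-exp} together with the classical expansion
\[
\log\frac{\sinh(w/2)}{w/2}=\sum_{n\ge1}\frac{B_{2n}}{2n}\frac{w^{2n}}{(2n)!}.
\]
Since $\frac{\sin(\pi z)}{\pi z}=\frac{\sinh(w/2)}{w/2}$, this gives
\[
\sum_{k\ge0}\mathcal M_{2k}\frac{w^{2k}}{(2k)!}
=\exp\Big(2\sum_{n\ge1}\Big(H_{2n}+\frac{B_{2n}}{4n}\Big)\frac{w^{2n}}{(2n)!}+2\sum_{n\ge1} H_{2n+1}\frac{w^{2n+1}}{(2n+1)!}\Big).
\]
The left side is even, so the logarithm of the right side is even as well, and the odd terms drop out. (If one prefers, the odd $H_n$ are simply not involved: they are forced to vanish.) The first step is to verify this sinh-logarithm identity, which follows by differentiating: $\frac{d}{dw}\log\frac{\sinh(w/2)}{w/2}=\frac12\coth(w/2)-\frac1w$, whose expansion is the Bernoulli series. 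The constant $\frac{B_{2n}}{4n}$ comes out correctly once the factor $2$ in front of the sum is taken into account.

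Next, set $u=w^2$ and $y_n=2(2n)!\cdot\frac{1}{(2n)!}\big(H_{2n}+\frac{B_{2n}}{4n}\big)$, i.e.\ write the exponent as $\sum_n a_n u^n$ with $a_n=\frac{2}{(2n)!}\big(H_{2n}+\frac{B_{2n}}{4n}\big)$. Expand the exponential multinomially (Fa\`a di Bruno, equivalently the complete Bell polynomial generating function): the coefficient of $u^k$ in $\exp(\sum a_n u^n)$ is $\sum_{\lambda\vdash k}\prod_j \frac{a_j^{m_j}}{m_j!}$. Multiplying by $(2k)!$ gives $\mathcal M_{2k}$, and substituting $a_j$ yields exactly $(2k)!\prod_j \frac{2^{m_j}}{m_j!((2j)!)^{m_j}}$ times $\prod_j (H_{2j}+B_{2j}/4j)^{m_j}$. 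This is the first formula with $\phi_M$.

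For the inverse formula, take logarithms instead. With $b_j=\mathcal M_{2j}/(2j)!$, the generating function is $1+\sum_j b_j u^j$, so
\[
\log\Big(1+\sum_j b_ju^j\Big)=\sum_{\ell\ge1}\frac{(-1)^{\ell-1}}{\ell}\Big(\sum_j b_ju^j\Big)^\ell.
\]
By the multinomial theorem, the coefficient of $u^k$ is
\[
\sum_{\lambda\vdash k}\frac{(-1)^{\ell(\lambda)-1}}{\ell(\lambda)}\frac{\ell(\lambda)!}{\prod_j m_j!}\prod_j b_j^{m_j}
=\sum_{\lambda\vdash k}(-1)^{\ell-1}(\ell-1)!\prod_j\frac{b_j^{m_j}}{m_j!}.
\]
This is the standard inversion of complete Bell polynomials. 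Equating it with $a_k=\frac{2}{(2k)!}(H_{2k}+\frac{B_{2k}}{4k})$ and solving for $H_{2k}$ gives the coefficient $\frac{(2k)!}{2}(-1)^{\ell-1}(\ell-1)!\prod_j\frac{1}{m_j!((2j)!)^{m_j}}$, which is $\phi_S$.

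The only delicate point is the bookkeeping of the Bernoulli constants, namely confirming that $\log$ of the sinh prefactor contributes exactly $+\frac{B_{2n}}{4n}$ inside the factor of $2$. The rest is formal power series manipulation in $u=w^2$.
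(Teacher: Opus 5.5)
Your proof is correct and is essentially the paper's first proof. Both arguments write $G_S(e^{w};q)/G_S(1;q)$ as an exponential, with the Bernoulli constants coming from $\log\frac{\sin(\pi z)}{\pi z}$, and then use the exp/log (complete Bell polynomial) inversion. The only difference is cosmetic: you substitute $u=w^2$ before expanding, so the reindexing from partitions of $2k$ into even parts to partitions of $k$ is automatic, and the odd coefficients vanish because the logarithm of an even series is even, rather than via the inversion formula (the stray definition of $y_n$ is unused and can be deleted).
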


This theorem provides a uniform way to treat moments of symmetric combinatorial statistics.
In later sections, we give several examples including the partition crank and rank, and the unimodal rank. When the symmetry under \(\zeta\mapsto \zeta^{-1}\) is absent, a similar approach
can still be applied, and this more general situation will be considered in Section~\ref{sec:gen_stat}.

We give two proofs of Theorem~\ref{thm:main}. The first proof is based on properties of the complete Bell polynomials and their inversion formula. This approach is short and algebraic, and it makes the inversion between the
moments and the functions \(H_{k}\) transparent. The second proof is analytic
and uses Fa\`a di Bruno's formula. This proof provides an alternative way to compute the functions \(H_k(\tau)\), especially when a direct algebraic manipulation is not immediate.

The rest of the paper is organized as follows. In Section~\ref{sec:prelim}, we recall the definition and basic properties of the complete Bell polynomials and introduce Fa\`a di Bruno's formula in terms of partition traces. In Section~\ref{sec:gen_stat}, we investigate the role of the complete Bell polynomials in
the algebraic treatment of moment generating functions. In Section~\ref{sec:main}, we give two proofs of Theorem~\ref{thm:main} and derive its corollaries for the partition crank and rank, and the unimodal rank. In Section~\ref{sec:app}, we discuss applications of our results. Finally, in Section~\ref{sec:con_rmk}, we conclude with possible directions for further
study.

\section{Preliminaries}\label{sec:prelim}

In this section, we recall the definition of the complete Bell polynomials and their inversion formula. We also introduce Fa\`a di Bruno's formula in the partition trace form used  by Matsusaka \cite{matsusaka2025applications}.

\subsection{Bell polynomials and M\"obius inversion}
We shall use set partitions.   
A \emph{set partition} of a finite set \(S\) is a collection
\[
\pi=\{B_1,B_2,\ldots,B_r\}
\]
of pairwise disjoint nonempty subsets \(B_i\subseteq S\) such that
\[
B_1\sqcup B_2\sqcup\cdots\sqcup B_r=S.
\]
The subsets \(B_i\) are called the \emph{blocks} of \(\pi\), and we write
\[
|\pi|:=r
\]
for the number of blocks of \(\pi\). We denote by \(\Pi_n\) the set of all set partitions of
\[
[n]:=\{1,2,\ldots,n\}.
\]

For a set partition \(\pi\in\Pi_n\), define
\[
\lambda(\pi):=(1^{m_1},2^{m_2},\ldots,n^{m_n})\vdash n,
\]
where \(m_j\) denotes the number of blocks of size \(j\). Thus
\[
|\pi|
=
m_1+m_2+\cdots+m_n=\ell(\lambda).
\]
For example, the set partition
\[
\pi=\{\{1,3\},\{2,4,5\}\}
\]
has associated integer partition
\[
\lambda(\pi)=(2,3)=(2^1,3^1)\vdash 5.
\]

 The Bell number $B(n)$ counts the number of set partitions of a set of $n$ elements.
Its exponential generating function is
\[
	\sum_{n=0}^\infty B(n)\frac{t^n}{n!}=\exp(e^t-1).
\]
The complete Bell polynomial $B_n(X_1,\ldots,X_n)\in \mathbb{Z}[X_1,\ldots,X_{n}]$ generalizes the Bell number and is defined by the generating function
\begin{equation}
	\sum_{n=0}^\infty B_n(X_1,\ldots,X_n)\frac{t^n}{n!}	
	=\exp\left(\sum_{j=1}^\infty X_j\frac{t^j}{j!}\right).
\label{eq:com-bell}
\end{equation}
In particular,  $B(n)=B_n(1,1,\dots,1)$. From the definition, or from the exponential formula, we have 
\[
	B_n(X_1,\ldots,X_n)
	=
	\sum_{\pi\in\Pi_n}
	\prod_{B\in\pi}X_{|B|}
	=
	\sum_{\lambda\vdash n}
	\frac{n!}{\prod_{j=1}^n m_j!(j!)^{m_j}}
	X_\lambda.
\]
For $\lambda=(1^{m_1},\ldots,n^{m_n})\vdash n$, define
\begin{equation}
	\phi_B(\lambda)
	=
	n!\prod_{j=1}^n \frac{1}{m_j!(j!)^{m_j}}.
\label{eq:phiB}
\end{equation}
Thus \(\phi_B(\lambda)\) is the number of set partitions of \([n]\) whose
block-size pattern is \(\lambda\).

We also recall the inverse relation for the complete Bell polynomials. Suppose that
two sequences \(\{X_n\}_{n\ge1}\) and \(\{Y_n\}_{n\ge1}\) are related by
\[
1+\sum_{n\ge1}Y_n\frac{t^n}{n!}
=
\exp\left(\sum_{n\ge1}X_n\frac{t^n}{n!}\right).
\]
In other words,
\[
Y_n=B_n(X_1,\ldots,X_n)=\sum_{\lambda\vdash n}\phi_B(\lambda)X_\lambda.
\]
Taking the logarithm of both sides gives
\[
\sum_{n\ge1}X_n\frac{t^n}{n!}
=
\log\left(
1+\sum_{n\ge1}Y_n\frac{t^n}{n!}
\right).
\]
Hence
\begin{equation}
X_n
=
\sum_{\pi\in\Pi_n}
(-1)^{|\pi|-1}(|\pi|-1)!
\prod_{B\in\pi}Y_{|B|}.
\label{eq:bell-inversion-set}
\end{equation}
In terms of integer partitions, this becomes
\begin{equation}
X_n
=
\sum_{\lambda\vdash n}
\mu(\lambda) \phi_B(\lambda)Y_\lambda,
\label{eq:bell-inversion-int}
\end{equation}
where 
\begin{equation}
\mu(\lambda)
:=
(-1)^{\ell(\lambda)-1}(\ell(\lambda)-1)!.
\label{eq:mobius}
\end{equation}
In probability theory, \(Y_n\) and \(X_n\) correspond to moments and
cumulants, respectively, and \eqref{eq:bell-inversion-int} is the
classical moment--cumulant inversion formula. It may also be viewed as Möbius inversion on the partition lattice; see
Stanley~\cite[Chapter~3]{StanleyEC1} for the latter viewpoint and Speed~\cite{Speed1983} for the former.

\subsection{Fa\`a di Bruno's formula and partition traces}
We recall Fa\`a di Bruno's formula in the partition trace form formulated by Matsusaka \cite{matsusaka2025applications}. 
With the notation of the preceding subsection, the complete Bell polynomial can be written as
\begin{equation}
B_k(X_1,\ldots,X_k)
=
\mathrm{Tr}_k(\phi_B;X_1,\ldots,X_k),
\label{eq:bell-trace}
\end{equation}
where $\phi_B (\lambda)$ was defined by \eqref{eq:phiB}. We  now state Fa\`a di Bruno's formula in this notation. 

\begin{thm}[Fa\`a di Bruno's formula, {\cite[Theorem 1.1]{matsusaka2025applications}}]
Let $k$ be a positive integer, and let $f(x)$ and $g(x)$ be functions with all necessary derivatives. For \( \lambda=(1^{m_1},\ldots,k^{m_k})\vdash k\),
set
\begin{equation}
\phi_F(\lambda)
=
k! f^{(\ell(\lambda))}(g(x))
\prod_{j=1}^k \frac{1}{m_j!(j!)^{m_j}},
\label{eq:phiF}
\end{equation}
where $F=f\circ g$.  Then
\begin{equation}
\frac{d^k}{dx^k}f(g(x))
=
\mathrm{Tr}_k\bigl(\phi_F;g^{(1)}(x),g^{(2)}(x),\ldots,g^{(k)}(x)\bigr).
\label{eq:Faa}
\end{equation}
In particular, if $h(x)$ is another function with all necessary derivatives defined, then by taking
$f(x)=\exp(x)$ and $g(x)=\log h(x)$, we obtain
\begin{align}
\frac{h^{(k)}(x)}{h(x)}
&=
B_k\bigl((\log h)^{(1)}(x),(\log h)^{(2)}(x),\ldots,(\log h)^{(k)}(x)\bigr) \nonumber \\
&=\mathrm{Tr}_k\bigl(\phi_B;(\log h)^{(1)}(x),(\log h)^{(2)}(x),\ldots,(\log h)^{(k)}(x)\bigr). 
\label{eq:Faa-exp-log}
\end{align}
\end{thm}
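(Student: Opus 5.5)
We prove the set-partition form of the formula by induction on $k$ and then collect terms according to block type. Concretely, we show that for every $k\ge1$
\begin{equation*}
\frac{d^k}{dx^k}f(g(x))=\sum_{\pi\in\Pi_k} f^{(|\pi|)}(g(x))\prod_{B\in\pi} g^{(|B|)}(x).
\end{equation*}
For $k=1$ the only set partition of $[1]$ is $\{\{1\}\}$. The right-hand side is then $f'(g(x))g'(x)$, which is the chain rule.

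For the inductive step, assume the identity holds for $k$ and differentiate each summand by the product rule. Fix $\pi\in\Pi_k$ with $r=|\pi|$ blocks. Differentiating the factor $f^{(r)}(g(x))$ gives $f^{(r+1)}(g(x))g^{(1)}(x)$. We match this term with the set partition $\pi\cup\{\{k+1\}\}$ of $[k+1]$, in which $k+1$ forms a new singleton block. Differentiating a factor $g^{(|B|)}(x)$ for a block $B\in\pi$ gives $g^{(|B|+1)}(x)$. We match this term with the set partition obtained from $\pi$ by replacing $B$ with $B\cup\{k+1\}$.

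The key point is that these two operations define a bijection. On one side are the pairs consisting of $\pi\in\Pi_k$ together with either ``new block'' or a choice of block of $\pi$. On the other side is $\Pi_{k+1}$. The inverse map deletes $k+1$ from whichever block contains it. Under this bijection each differentiated term equals $f^{(|\pi'|)}(g(x))\prod_{B\in\pi'}g^{(|B|)}(x)$ for the corresponding $\pi'\in\Pi_{k+1}$. This is where some care is needed: in the second case the number of blocks is unchanged, so the derivative order $r$ of $f$ stays correct. Summing over all $\pi$ completes the induction.

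Next we group the set partitions by their type $\lambda(\pi)=\lambda=(1^{m_1},\ldots,k^{m_k})$. The summand depends only on $\lambda$: it equals $f^{(\ell(\lambda))}(g(x))\prod_j (g^{(j)}(x))^{m_j}$, which is the monomial $X_\lambda$ evaluated at $X_j=g^{(j)}(x)$, times $f^{(\ell(\lambda))}(g(x))$. As recalled in the preceding subsection, the number of set partitions of type $\lambda$ is $\phi_B(\lambda)=k!\prod_j 1/(m_j!(j!)^{m_j})$. The elementary count behind this is to arrange $[k]$ in a row, cut it into consecutive blocks, and divide by the orderings within blocks and among equal-size blocks. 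The grouped coefficient is therefore exactly $\phi_F(\lambda)$, and the sum becomes $\mathrm{Tr}_k(\phi_F;g^{(1)},\ldots,g^{(k)})$. This proves \eqref{eq:Faa}.

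For \eqref{eq:Faa-exp-log}, take $f=\exp$ and $g=\log h$. Then $f^{(r)}(g(x))=h(x)$ for every $r$, so $\phi_F(\lambda)=h(x)\phi_B(\lambda)$. Dividing by $h(x)$ gives the trace with $\phi_B$, which is $B_k$ by \eqref{eq:bell-trace}.
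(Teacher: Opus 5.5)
Your proof is correct. The paper does not prove \eqref{eq:Faa} at all. It quotes the formula from Matsusaka as the classical Fa\`a di Bruno formula, and the only argument it supplies is the specialization $f=\exp$, $g=\log h$ leading to \eqref{eq:Faa-exp-log}. You carry out that step the same way, using $f^{(r)}(g(x))=h(x)$ so that $\phi_F=h\,\phi_B$.

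Your induction over $\Pi_k$ is the standard combinatorial proof of the cited formula. It rests on the bijection between $\Pi_{k+1}$ and the pairs $(\pi,c)$, where $c$ is either ``new singleton'' or a block of $\pi$. Collecting terms by block type gives exactly the trace form, because $\phi_B(\lambda)$ counts the set partitions of type $\lambda$, as noted in Section~\ref{sec:prelim}. This supplies what the paper takes on citation, and it needs only $k$-fold differentiability.

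For \eqref{eq:Faa-exp-log} alone there is a shorter route, which is essentially the paper's first proof of Theorem~\ref{thm:main}. Suppose $h$ is analytic and nonvanishing near $x$, as in all of the paper's applications. Then $h(x+t)/h(x)=\exp\bigl(\sum_{j\ge1}(\log h)^{(j)}(x)\,t^j/j!\bigr)$. Comparing coefficients of $t^k/k!$ with \eqref{eq:com-bell} gives $h^{(k)}(x)/h(x)=B_k\bigl((\log h)^{(1)}(x),\ldots,(\log h)^{(k)}(x)\bigr)$ directly, without the general formula.

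One hypothesis in your last step should be made explicit: $h(x)\neq0$, with a differentiable branch of $\log h$. The statement assumes this tacitly.
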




\section{Moments of Combinatorial Statistics} \label{sec:gen_stat}

In this section, we consider moments of combinatorial statistics without assuming the symmetry \eqref{eq:stat_symmetry}.  In particular, we demonstrate how moment generating functions can be algebraically expressed in terms of complete Bell polynomials.

\subsection{Moment generating functions} \label{subsec:gen_stat}

We retain the notation from the introduction:
\[
	G_S(\zeta;q)=
	\sum_{\substack{m\in\mathbb Z\\ n\geq 0}} S(m,n)\zeta^m q^n,
	\qquad
	\zeta=e^{2\pi iz}.
\]
In this section, however, we do not assume that the statistic \( s(\lambda) \) is symmetric.
Note that
\[
	G_S (1;q)=\sum_{n\geq 0} S(n)q^n.
\]

For \(r\geq 0\), we define the \(r\)-th moment generating function by
\begin{align*}
	M_r(q)
	:=
	\sum_{n\geq 0}\sum_{m\in \mathbb{Z}} m^rS(m,n)q^n
	&=
	\left.
	\left(\zeta\frac{\partial}{\partial \zeta}\right)^r
	G_S(\zeta;q)
	\right|_{\zeta=1} \\
	&=
	\left.
	\frac{1}{(2\pi i)^r}
	\frac{\partial^r}{\partial z^r}
	G_S(e^{2\pi iz};q)
	\right|_{z=0}.
\end{align*}
Since
\[
	\zeta^m=e^{2\pi imz}
	=
	\sum_{r \geq 0}
	m^r \frac{(2\pi iz)^r}{r!},
\]
we have
\begin{equation}
G_S(\zeta;q)
=
\sum_{r\geq 0}
M_r(q)\frac{(2\pi iz)^r}{r!}.
\label{eq:stat-moment}
\end{equation}

Suppose that \(g_n (\tau)\) is defined by
\begin{equation}
	G_S(\zeta;q)
	=:
	G_S(1;q)
	\exp\left( \sum_{n \geq 1} g_n(\tau) \frac{(2\pi i z)^n}{n!}\right),
	\qquad \zeta=e^{2\pi i z}.
\label{eq:intro_g_k}
\end{equation}
We also set \(	\mathcal{M}_r(q):={M_r(q)}\slash{G_S(1;q)}.\)
For convenience, we recall that, for a sequence \(\{X_j\}_{j\geq 1}\) and a partition
\(\lambda=(1^{m_1},\ldots,k^{m_k})\), we write
\[
	X_\lambda = \prod_{j=1}^k X_j^{m_j}.
\]
Then the definition of the complete Bell polynomials and the inversion formula
\eqref{eq:bell-inversion-int} give the following proposition.

\begin{prop} \label{prop:gen_stat_bell}
For \(r\geq 1\), we have
\begin{equation}
	\mathcal{M}_r (q)
	=
	B_r(g_1,g_2,\ldots,g_r) = \sum_{\lambda \vdash r} \phi_B(\lambda) g_\lambda.
\label{eq:Y_r_bell}
\end{equation}
Conversely, 
\[
	g_k(\tau)
	=
	\sum_{\lambda\vdash k}
	\mu(\lambda)\phi_B(\lambda) \mathcal{M}_\lambda .
\]
\end{prop}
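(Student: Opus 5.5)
The plan is to compare two expansions of $G_S(\zeta;q)$ as formal power series in the variable $t:=2\pi i z$. Then both claims follow from the defining generating function \eqref{eq:com-bell} and the inversion formula \eqref{eq:bell-inversion-int}.

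First, divide \eqref{eq:stat-moment} by $G_S(1;q)$. This normalization is legitimate because $G_S(1;q)$ has constant term $S(0)$. In the combinatorial setting $S(0)=1$, coming from the empty object, so $G_S(1;q)$ is invertible as a power series in $q$. Using $M_0(q)=G_S(1;q)$, we obtain
\[
\frac{G_S(\zeta;q)}{G_S(1;q)}=1+\sum_{r\ge1}\mathcal{M}_r(q)\frac{t^r}{r!}.
\]
On the other hand, the definition \eqref{eq:intro_g_k} says that the same quotient equals $\exp\bigl(\sum_{n\ge1}g_n(\tau)t^n/n!\bigr)$.

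Both sides are power series in $t$, with coefficients in the ring of $q$-series or functions of $\tau$. Comparing the coefficients of $t^r/r!$ and using \eqref{eq:com-bell} with $X_j=g_j$ gives $\mathcal{M}_r=B_r(g_1,\ldots,g_r)$. The closed form $B_r=\sum_{\lambda\vdash r}\phi_B(\lambda)X_\lambda$ then yields the second equality in \eqref{eq:Y_r_bell}.

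For the converse, we are exactly in the situation preceding \eqref{eq:bell-inversion-set}, with $Y_n=\mathcal{M}_n$ and $X_n=g_n$. Taking the logarithm, which is well defined on formal series with constant term $1$, gives $\sum g_n t^n/n!=\log\bigl(1+\sum\mathcal{M}_n t^n/n!\bigr)$. Then \eqref{eq:bell-inversion-int} gives $g_k=\sum_{\lambda\vdash k}\mu(\lambda)\phi_B(\lambda)\mathcal{M}_\lambda$.

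The only point needing care is justifying the coefficient comparison. Specifically, we must check that \eqref{eq:stat-moment} is a genuine expansion in $t$ and that the $g_n$ in \eqref{eq:intro_g_k} are uniquely determined. I would handle this by treating everything formally in $t$ over the coefficient ring. Then \eqref{eq:intro_g_k} simply defines $\sum g_n t^n/n!$ as the formal logarithm of the normalized series, and uniqueness is automatic.
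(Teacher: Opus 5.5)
Your proposal is correct and is essentially the paper's argument. The paper derives the proposition directly from the definition \eqref{eq:com-bell} and the inversion formula \eqref{eq:bell-inversion-int}, by comparing \eqref{eq:stat-moment} divided by $G_S(1;q)=M_0(q)$ with \eqref{eq:intro_g_k}; your added remarks on the invertibility of $G_S(1;q)$ and on working formally in $t=2\pi i z$ just make explicit what the paper leaves implicit.
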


\subsection{The largest part statistic}

Let \(L(\lambda)=\lambda_1\) denote the largest part of a partition
\(\lambda\), and let
\[
S_L(m,n):=\#\{\lambda\vdash n : L(\lambda)=m\}.
\]
Then
\[
P_L(\zeta;q)
:=
\sum_{m,n\geq 0}S_L(m,n)\zeta^m q^n
=
\sum_{m\geq 0}
\frac{\zeta^m q^m}{ (q)_m }
=
\frac{1}{  (\zeta q)_\infty  }.
\]

For this statistic $L(\lambda)$, we can explicitly evaluate $g_k(\tau)$ in \eqref{eq:intro_g_k}.
Multiplying by \((q)_\infty\) and taking logarithms, we obtain
\begin{align*}
\log\left((q)_\infty P_L(\zeta;q)\right)
&=
\sum_{j\geq 1}
\left(\log(1-q^j)-\log(1-\zeta q^j)\right) \\
&=
\sum_{j\geq 1}\sum_{n\geq 1}
\frac{\zeta^n-1}{n}q^{jn}  \\
&=
\sum_{n\geq 1}
\frac{\zeta^n-1}{n}\frac{q^n}{1-q^n}.
\end{align*}
Since \(\zeta=e^{2\pi iz}\), this becomes
\[
\log\left((q)_\infty P_L(\zeta;q)\right)
=
\sum_{r\geq 1}
D_r(q)\frac{(2\pi iz)^r}{r!},
\]
where
\begin{equation}
D_r(q)
:=
\sum_{n\geq 1}n^{r-1}\frac{q^n}{1-q^n}
=
\sum_{m\geq 1}\sigma_{r-1}(m)q^m.
\label{eq:divisor_power_sum}
\end{equation}
We use the notation \(D_r(q)\) to distinguish these divisor-sum series, which
occur for all \(r\geq 1\), from the Eisenstein series \(G_k(q)\) introduced
earlier.
In summary, we arrive at
\begin{equation}
P_L(\zeta;q)
=
\frac{1}{(q)_\infty}
\exp\left(
\sum_{r\geq 1}
D_r(q)\frac{(2\pi iz)^r}{r!}
\right).
\label{eq:largest-exp}
\end{equation}

We define the \(r\)-th moment generating function of the largest part by
\[
	P_{L,r}(q)
:=
\sum_{n\geq 0}\sum_{\lambda\vdash n}L(\lambda)^r q^n.
\]

Then Proposition~\ref{prop:gen_stat_bell} gives the following corollary.

\begin{cor}
Let $\mathcal{P}_{L,r}(q) := (q)_\infty P_{L,r} (q)$. Then, for \(r\geq 1\),
\begin{align*}
	\mathcal{P}_{L,r} (q)
	&= \sum_{\lambda \vdash r} \phi_B(\lambda) D_\lambda .  \\
\intertext{Conversely, for \(k\geq 1\),}
	D_k(q)
	&=
	\sum_{\lambda\vdash k}
	\mu(\lambda)\phi_B(\lambda) \mathcal{P}_{L,\lambda}(q),
\end{align*}
where
\[
	\mathcal{P}_{L,\lambda}(q)
	:=
	\prod_{j=1}^k \mathcal{P}_{L,j}(q)^{m_j}.
\]
\end{cor}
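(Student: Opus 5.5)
The corollary is Proposition~\ref{prop:gen_stat_bell} applied to $s=L$. The plan is to match the exponential expansion \eqref{eq:largest-exp} with the normalized form \eqref{eq:intro_g_k}, and then read off both identities.

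\textbf{Identifying the data.} First I would identify $G_S(1;q)$. Setting $z=0$ (so $\zeta=1$) in \eqref{eq:largest-exp}, the exponential equals $1$, so
\[
P_L(1;q)=\frac{1}{(q)_\infty},
\]
which is also clear from $P_L(\zeta;q)=1/(\zeta q)_\infty$. Hence \eqref{eq:largest-exp} is exactly of the form \eqref{eq:intro_g_k}, with $G_S(1;q)=1/(q)_\infty$ and $g_r(\tau)=D_r(q)$ for all $r\ge1$. The normalized moment is then
\[
\mathcal M_r(q)=\frac{P_{L,r}(q)}{G_S(1;q)}=(q)_\infty P_{L,r}(q)=\mathcal P_{L,r}(q),
\]
since $P_{L,r}$ is exactly the $M_r$ of the definition for the statistic $L$.

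\textbf{Reading off the identities.} Substituting these identifications into \eqref{eq:Y_r_bell} gives the first formula,
\[
\mathcal P_{L,r}=B_r(D_1,\dots,D_r)=\sum_{\lambda\vdash r}\phi_B(\lambda)D_\lambda .
\]
The converse formula is the second statement of Proposition~\ref{prop:gen_stat_bell}, which comes from the inversion \eqref{eq:bell-inversion-int}, with $\mathcal M_\lambda$ replaced by $\mathcal P_{L,\lambda}$.

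\textbf{Justification of the expansions.} The only point needing care is that all expansions are legitimate formal power series identities in $z$ with coefficients in $\mathbb{Z}[[q]]$.

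\begin{itemize}
\item The logarithm computation leading to \eqref{eq:largest-exp} is valid because $(q)_\infty P_L(\zeta;q)$ has constant term $1$ in $q$.
\item Expanding $\zeta^n-1=\sum_{r\ge1}n^r(2\pi i z)^r/r!$ and interchanging sums is justified coefficientwise in $q$. For each fixed power of $q$, only finitely many $n$ contribute.
\item Comparing coefficients of $(2\pi i z)^r/r!$ between \eqref{eq:stat-moment} and the Bell generating function \eqref{eq:com-bell} then gives the result, with $t=2\pi i z$.
\end{itemize}

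I expect no real obstacle here. The mildest subtlety is confirming that the $r=1$ term $D_1$ is included. It is, because \eqref{eq:intro_g_k} allows $n\ge1$ and the largest-part statistic is not symmetric.
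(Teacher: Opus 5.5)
Your proposal is correct and matches the paper's argument: the corollary is just Proposition~\ref{prop:gen_stat_bell} applied to \eqref{eq:largest-exp}, with $G_S(1;q)=1/(q)_\infty$, $g_r=D_r$ and $\mathcal{M}_r=\mathcal{P}_{L,r}$. Your remarks justifying the formal power-series manipulations are sound, and they spell out more than the paper does.
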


\section{Symmetric Combinatorial Statstics} \label{sec:main}

In this section, we consider integer-valued statistics \(s(\lambda)\) that are
symmetric, such as the partition crank and rank.

\subsection{Proof of Theorem~\ref{thm:main}}

We first prove our main theorem by employing the complete Bell polynomials and their inversion formula.

\begin{proof}[First proof of Theorem \ref{thm:main}]
By the classical expansion of \(\log(\sin z/z)\) (see, for example,
\cite[4.3.71]{AS1972}), we have
\[
\log\left(\frac{\sin z}{z}\right)
=
\sum_{n\geq 2}
\frac{B_n}{n\,n!}(2iz)^n.
\]
Recall that  $\mathcal{M}_r (q) = {M_r (q)}/{G_S (1;q)}$. We put
\[
A_1:=0,
\qquad
A_j:=2H_j(\tau)+\frac{B_j}{j}
\quad (j\geq 2).
\]
Then \eqref{eq:intro-H-exp} gives
\[
	G_S(\zeta;q)
	=
	G_S (1;q)
	\exp\left(
	\sum_{j\geq 1}
	A_j\frac{(2\pi iz)^j}{j!}
	\right).
\]
From the defining generating function of complete Bell polynomials, we see that
\[
	G_S (\zeta;q)
	=
	G_S(1;q)
	\sum_{r\geq 0}
	B_r(A_1,\ldots,A_r)
	\frac{(2\pi iz)^r}{r!}.
\]
Comparing the coefficients of \((2\pi iz)^r/r!\) in this expansion and in
\eqref{eq:stat-moment}, after dividing by \(G_S(1;q)\), we obtain
\begin{equation} \label{eq:first_M_r}
	\mathcal{M}_r(q)
	=
	B_r\left( 
	A_1,\,
	A_2,\,
	A_3,\,
	\dots,\,
	A_r
	\right).
\end{equation}

Conversely, the functions \(A_k(\tau)\) can be recovered from the
moment generating functions. By
\eqref{eq:bell-inversion-int}, for \(k\geq 1\), we have
\begin{equation} \label{eq:A_k_sym}
	A_k(\tau)
	=
	\sum_{\lambda\vdash k}
	\mu(\lambda)
	\phi_B(\lambda)
	\prod_{j\geq 1}
	\mathcal{M}_j(q)^{m_j}.
\end{equation}
Since the statistic is symmetric, we have \(\mathcal{M}_k(q)=0\) for odd
\(k\). Thus, it is immediate that $A_k (\tau) = 0$ for odd $k$ from \eqref{eq:A_k_sym}, because there is at least one odd part in a partition of an odd integer. Therefore, from \eqref{eq:first_M_r} and the bijection between the set of partitions of $k$ and the set of partitions of $2k$ with only even parts, we find that
\begin{align*}
	\mathcal{M}_{2k} (q) 
	&= \sum_{\lambda \vdash 2k} \phi_B (\lambda) A_\lambda \\
	&= \sum_{\lambda \vdash k} (2k)!\prod_{j=1}^k
	\frac{2^{m_j} }{m_j! ((2j)!)^{m_j}} \prod_{j=1}^{k} \left( \frac{A_{2j}}{2} \right)^{m_j}\\
	&=  \mathrm{Tr}_{k}\left( \phi_{M}; H_2 + \frac{B_2}{4}, H_4 + \frac{B_4}{8}, \ldots, H_{2k} + \frac{B_{2k}}{4k} \right),
\end{align*}
where $\phi_M (\lambda)$ is as defined by \eqref{eq:phi_M}.

Applying the same argument to the inversion formula \eqref{eq:A_k_sym}, only
partitions of \(2k\) with even parts contribute. Hence
\begin{align*}
	A_{2k}
	&=
	\sum_{\lambda\vdash 2k}
	\mu(\lambda)
	\phi_B(\lambda)
	\mathcal{M}_\lambda(q) \\
	&=
	\mathrm{Tr}_{k}\left(
	2\phi_{S};
	\mathcal{M}_2(q),
	\mathcal{M}_4(q),
	\ldots,
	\mathcal{M}_{2k}(q)
	\right),
\end{align*}
where \(\phi_S(\lambda)\) is defined in \eqref{eq:phi_S}. Dividing by \(2\) and using the definition of \(A_{2k}\), we obtain the second
identity of the theorem.
\end{proof}


We now give another proof of Theorem~\ref{thm:main} via Fa\`a di Bruno's formula.
		
\begin{proof}[Second proof of Theorem~\ref{thm:main}]	
We start with
\[
\log \frac{G_S(\zeta;q)}{G_S(1;q)} 
=
\sum_{j\geq 1}
A_j\frac{(2\pi iz)^j}{j!}. 
\]
Differentiating \(k\) times with respect to \(z\), we find
\begin{equation}
	\left(\frac{d}{dz}\right)^{k} \log \frac{G_S(\zeta;q)}{G_S(1;q)}
	=
	\sum_{n\ge k}\frac{(2\pi i)^n A_n(\tau)}{(n-k)!}z^{n-k}.
	\label{eq:logG-derivative}
\end{equation}
Evaluating at \(z=0\), we obtain
\begin{equation}
	L_k^G (q)
	:=
	\left.\left(\frac{d}{dz}\right)^k\log \frac{ G_S(\zeta;q)}{G_S(1;q)} \right|_{z=0}
	= (2\pi i)^k A_k (\tau).
\label{eq:der-log-G}
\end{equation}
We apply \eqref{eq:Faa-exp-log} to
\[
h(z):=\frac{G_S(e^{2\pi iz};q)}{G_S(1;q)}.
\]
Since \(h(0)=1\), we obtain
\begin{equation}
	(2\pi i)^k \mathcal{M}_k(q)
	=
	\mathrm{Tr}_k\bigl(
	\phi_B;
	L_1^G(q),L_2^G(q),\ldots,L_k^G(q)
	\bigr).
	\label{eq:faa-GS}
\end{equation}
Using \eqref{eq:der-log-G}, this gives
\[
	(2\pi i)^k \mathcal{M}_k(q)
	=
	\mathrm{Tr}_k\bigl(
	\phi_B;
	(2\pi i)A_1,(2\pi i)^2A_2,\ldots,(2\pi i)^kA_k
	\bigr).
\]
After cancelling the common factor \((2\pi i)^k\), we recover
\[
	\mathcal{M}_k(q)
	=
	B_k(A_1,\ldots,A_k).
\]
The first identity of the theorem follows from this identity exactly as in the
first proof.

Conversely, applying Fa\`a di Bruno's formula \eqref{eq:Faa} with $f(x)=\log x$ and $g(z)= G_S(\zeta;q)/G_S(1;q)$, we obtain
\begin{equation*}
\begin{aligned}
	&\left(\frac{d}{dz}\right)^{2k} \log \left( \frac{G_S(\zeta;q)}{G_S(1;q)} \right)  \\
	&=
	\mathrm{Tr}_{2k}\left(
	\phi_F;
	\left(\frac{d}{dz}\right) \frac{G_S(\zeta;q)}{G_S(1;q)},
	\left(\frac{d}{dz}\right)^2 \frac{G_S(\zeta;q)}{G_S(1;q)} ,
	\ldots,
	\left(\frac{d}{dz}\right)^{2k} \frac{G_S(\zeta;q)}{G_S(1;q)}
	\right),
\end{aligned}
	\label{eq:logG-FDB}
\end{equation*}
where, for \(\mu =(1^{m_1},\ldots,(2k)^{m_{2k}}) \vdash 2k\),
\[
\phi_F(\mu)
=
(2k)!
(-1)^{\ell(\mu)-1}(\ell(\mu)-1)!
\left(\frac{G_S(1;q)}{G_S(\zeta;q)}\right)^{\ell(\mu)}
\prod_{j=1}^{2k}\frac{1}{m_j!(j!)^{m_j}}.
\]
Evaluating at \(z=0\), and using
\[
\left.
\left(\frac{d}{dz}\right)^j
\frac{G_S(\zeta;q)}{G_S(1;q)}
\right|_{z=0}
=
(2\pi i)^j\mathcal{M}_j(q),
\]
we obtain
\begin{equation}
	\mathrm{Tr}_{2k}\left(
	\phi_F\big|_{z=0};
	(2\pi i) \mathcal{M}_1(q),(2\pi i)^2 \mathcal{M}_2(q),\ldots,(2\pi i)^{2k} \mathcal{M}_{2k}(q)
	\right)
	=
	 (2\pi i)^{2k} A_{2k}(\tau),
	\label{eq:logG-eval}
\end{equation}
where the right-hand side follows from \eqref{eq:der-log-G} with $k$ replaced by $2k$. Since \(\mathcal{M}_{2j-1}(q)=0\), only partitions of \(2k\) into even parts
contribute. As before, replacing each part \(2j\) by \(j\) reduces the sum to
one over partitions of \(k\). Therefore \eqref{eq:logG-eval} becomes
\[
(2\pi i)^{2k}
\mathrm{Tr}_k\left(
\widetilde{\phi}_{F};
\mathcal{M}_2(q), \mathcal{M}_4(q),\ldots, \mathcal{M}_{2k}(q)
\right)
=
(2\pi i)^{2k} A_{2k} (\tau)
\]
where, for \(\lambda=(1^{m_1},\ldots,k^{m_k})\vdash k\),
\[
\widetilde{\phi}_{F}(\lambda)
=
(2k)!
(-1)^{\ell(\lambda)-1}(\ell(\lambda)-1)! 
\prod_{j=1}^k \frac{1}{m_j!((2j)!)^{m_j}}.
\]
Cancelling $(2\pi i)^{2k}$ from both sides yields
\[
A_{2k}(\tau)
=
\mathrm{Tr}_k\left( \widetilde{\phi}_{F};
\mathcal{M}_2(q), \mathcal{M}_4(q),\ldots, \mathcal{M}_{2k}(q)
\right).
\]
Since \(\widetilde{\phi}_{F}=2\phi_S\),  the second identity of the
theorem follows from the definition of \(A_{2k}\). This completes the second
proof.
\end{proof}

\subsection{Partition crank}

We first note that
\begin{align}
	\frac{d}{dz}\log(C(\zeta; q))
	&=
	2\pi i
	\left(
	\sum_{n\geq 1}\frac{\zeta q^n}{1-\zeta q^n}
	-\sum_{n\geq 1}\frac{\zeta^{-1}q^n}{1-\zeta^{-1}q^n}
	\right)\label{infty Lambert series}\\
	&=
	2\pi i
	\left(
	\sum_{n,m\geq 1}(\zeta q^n)^m
	-\sum_{n,m\geq 1}(\zeta^{-1}q^n)^m
	\right).
	\nonumber
\end{align}
Iterating this differentiation yields
\begin{equation}
	\left.\left(\frac{d}{dz}\right)^k\log(C(\zeta; q))\right|_{z=0}
	=
	\begin{cases}
		0, & \text{if $k$ is odd},\\[4pt]
		2(2\pi i)^k  D_{k}, & \text{if $k$ is even},
	\end{cases}
\label{eq:der-log-crank}
\end{equation}
where $D_k (q)$ is the generating function for sums of divisor powers defined in \ \eqref{eq:divisor_power_sum}. Since
\(
	D_{2k}(q)=G_{2k}(\tau)+\frac{B_{2k}}{4k},
\)
comparison with \eqref{eq:der-log-G} shows that, in the crank case,
\[
H_{2k}(\tau)=G_{2k}(\tau).
\]
Therefore Theorem~\ref{thm:main} gives the following identities for the crank
moments.

\begin{cor}\label{cor:ptn_crank}
For every integer \(k\ge 1\), let $\mathcal{C}_k (q) = (q)_\infty C_k (q)$.  Then we have
\begin{align}
	\mathcal{C}_{2k}(q)
	&=
	\mathrm{Tr}_{k}
	\left(
	\phi_M;
	G_2+\frac{B_2}{4},
	G_4+\frac{B_4}{8},
	\ldots,
	G_{2k}+\frac{B_{2k}}{4k}
	\right)
	\label{eq:crank-trace-form}
	\\
\intertext{and}
	G_{2k}(\tau)
	&=
	-\frac{B_{2k}}{4k}
	+
	\mathrm{Tr}_{k}\left(
	\phi_{S};
	\mathcal{C}_2(q), \mathcal{C}_4(q), \ldots, \mathcal{C}_{2k}(q)
	\right).
	\label{eq:tr-c-g}
\end{align}
\end{cor}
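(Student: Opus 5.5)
The plan is to specialize Theorem~\ref{thm:main} to the crank generating function $G_S(\zeta;q)=C(\zeta;q)$. This requires three checks: the symmetry \eqref{eq:stat_symmetry}, the normalization $G_S(1;q)=1/(q)_\infty$, and the identification $H_{2k}=G_{2k}$ in the exponential expansion \eqref{eq:intro-H-exp}.

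\textbf{Symmetry and normalization.} The product formula \eqref{eq:intro-crank-gf} is invariant under $\zeta\mapsto\zeta^{-1}$, so $M(m,n)=M(-m,n)$. Setting $\zeta=1$ gives $C(1;q)=1/(q)_\infty$. Hence the normalized moments are $\mathcal{M}_r(q)=(q)_\infty C_r(q)=\mathcal{C}_r(q)$.

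\textbf{Identification of $H_{2k}$.} I would compare two computations of the $k$-th $z$-derivative at $0$ of
\[
\log\bigl((q)_\infty C(\zeta;q)\bigr).
\]
On one side, \eqref{eq:der-log-G} says this derivative equals $(2\pi i)^kA_k$, where $A_j=2H_j+B_j/j$ for $j\ge 2$ and $A_1=0$. On the other side, \eqref{eq:der-log-crank} gives $0$ for odd $k$ and $2(2\pi i)^kD_k$ for even $k$.

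So $A_{2k}=2D_{2k}$. Using \eqref{eq:intro-Eisenstein} we have $D_{2k}=G_{2k}+B_{2k}/(4k)$, and therefore
\[
2H_{2k}+\frac{B_{2k}}{2k}=2G_{2k}+\frac{B_{2k}}{2k},
\]
that is, $H_{2k}=G_{2k}$. For odd $j\ge 3$ we get $A_j=0$, and $B_j=0$, so $H_j=0=G_j$.

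This is exactly the hypothesis \eqref{eq:intro-H-exp}, which is consistent with \eqref{eq:intro-crank-exp}. To be safe, one can check it directly: the constant term $B_{2k}/(2k)$ is the contribution of the factor $\sin(\pi z)/(\pi z)$, via the expansion of $\log(\sin z/z)$ used in the first proof. The remaining log-derivative then matches $2\sum_k G_{2k}(2\pi iz)^{2k}/(2k)!$.

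\textbf{Conclusion.} Substituting $H_{2k}=G_{2k}$ and $\mathcal{M}_{2k}=\mathcal{C}_{2k}$ into the two identities of Theorem~\ref{thm:main} yields \eqref{eq:crank-trace-form} and \eqref{eq:tr-c-g} directly.

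The only delicate step is verifying \eqref{eq:der-log-crank} itself. This means expanding
\[
\sum_{n,m}\bigl((\zeta q^n)^m-(\zeta^{-1}q^n)^m\bigr)
\]
in powers of $z$. Its $(k-1)$-st derivative at $z=0$ gives
\[
(2\pi i)^{k-1}\sum_{m,n} m^{k-1}\bigl(1-(-1)^{k-1}\bigr)q^{nm}.
\]
This vanishes for odd $k$ and equals $2(2\pi i)^{k-1}D_k$ for even $k$, once the prefactor $2\pi i$ is included. I expect it to be a routine but careful bookkeeping check, with the main risk being sign and factor-of-two consistency with the Bernoulli constants.
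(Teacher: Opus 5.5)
Your proposal is correct and follows the paper's own route. The paper likewise differentiates $\log C(\zeta;q)$ via the Lambert series to get \eqref{eq:der-log-crank}, compares this with \eqref{eq:der-log-G} using $D_{2k}=G_{2k}+\frac{B_{2k}}{4k}$ to conclude $H_{2k}=G_{2k}$, and then specializes Theorem~\ref{thm:main}; your explicit checks of the symmetry, of $C(1;q)=1/(q)_\infty$, and of the factors of $2$ and $2\pi i$ are all accurate.
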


\begin{rmk}
The corollary shows that the two families
\[
	\{\mathcal{C}_{2k}(q):k\geq 1\}
	\qquad\text{and}\qquad
	\{G_{2k}(\tau):k\geq 1\}
\]
generate the same \(\mathbb{Q}\)-algebra. Hence
\[
\mathbb{Q}[\mathcal{C}_2,\mathcal{C}_4,\ldots]
=
\mathbb{Q}[G_2,G_4,\ldots].
\]
\end{rmk}

\subsection{Partition rank}

The corresponding statement for the rank moments and mock Eisenstein series is as follows.

\begin{cor}\label{cor:ptn_rank}
For every integer \(k\ge 1\), let $\mathcal{R}_k (q) = (q)_\infty R_k (q)$. Then, we have
\begin{align}
	\mathcal{R}_{2k}(q)
	&=
	\mathrm{Tr}_{k}\left(
	\phi_M;
	f_2+\frac{B_2}{4},
	f_4+\frac{B_4}{8},
	\ldots,
	f_{2k}+\frac{B_{2k}}{4k}
	\right)
	\label{eq:rank-trace-form}
	\\
\intertext{and}
	f_{2k}(q)
	&=
	-\frac{B_{2k}}{4k}
	+
	\mathrm{Tr}_{k}\left(\phi_{S}; \mathcal{R}_2(q), \mathcal{R}_4(q), \ldots, \mathcal{R}_{2k}(q)\right).
	\label{eq:tr-r-f}
\end{align}
\end{cor}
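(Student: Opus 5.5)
The plan is to obtain Corollary~\ref{cor:ptn_rank} as a direct specialization of Theorem~\ref{thm:main}, exactly as was done for the crank in Corollary~\ref{cor:ptn_crank}. We take the statistic to be the partition rank, so that $G_S(\zeta;q)=R(\zeta;q)$, $G_S(1;q)=1/(q)_\infty$, $M_r(q)=R_r(q)$, and the normalized moments are $\mathcal{M}_r(q)=(q)_\infty R_r(q)=\mathcal{R}_r(q)$.

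Two hypotheses of Theorem~\ref{thm:main} must be checked.

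\emph{Symmetry.} The symmetry \eqref{eq:stat_symmetry}, namely $N(m,n)=N(-m,n)$, follows from conjugation of partitions, which sends $\operatorname{rank}(\lambda)=\lambda_1-\ell(\lambda)$ to its negative. It is also visible in \eqref{eq:intro-rank-gf}, which is invariant under $\zeta\mapsto\zeta^{-1}$.

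\emph{Exponential form.} Next we match \eqref{eq:intro-mock-eis} with the shape \eqref{eq:intro-H-exp}. Since $G_S(1;q)=1/(q)_\infty$, the prefactors $\sin(\pi z)/(\pi z(q)_\infty)$ agree, so \eqref{eq:intro-H-exp} holds with $H_n=f_n$ for $n\ge2$. The definition \eqref{eq:intro-mock-eis} also contains a term $f_1$, so we must show $f_1=0$, and we may as well note $f_{2k+1}=0$ for all $k$. The reason is that $\log R(\zeta;q)$ and $\log(\sin(\pi z)/(\pi z))$ are both even in $z$. Hence $\sum_k f_k(2\pi i z)^k/k!$ is even, and all odd-indexed $f_k$ vanish. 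This parity check is the only point needing care, since Theorem~\ref{thm:main} as stated sums from $n\ge2$.

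With both hypotheses in place, we substitute $H_{2k}=f_{2k}$ and $\mathcal{M}_{2k}=\mathcal{R}_{2k}$ into the two identities of Theorem~\ref{thm:main}. This yields \eqref{eq:rank-trace-form} and \eqref{eq:tr-r-f} verbatim.

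I expect no real obstacle. The subtle point is ensuring that the odd-index terms $f_{2k+1}$ vanish, so that the expansion genuinely fits \eqref{eq:intro-H-exp}. This is settled by the evenness argument above, or equivalently by running the first proof of Theorem~\ref{thm:main}, where $A_k=0$ for odd $k$ is deduced from \eqref{eq:A_k_sym}.
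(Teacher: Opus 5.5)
Your proposal is correct and matches the paper, which obtains this corollary by directly specializing Theorem~\ref{thm:main} to the rank statistic, with $G_S(1;q)=1/(q)_\infty$, $\mathcal{M}_r=\mathcal{R}_r$ and $H_{2k}=f_{2k}$. The paper leaves two points implicit that you check explicitly and correctly: the symmetry $N(m,n)=N(-m,n)$ via conjugation, and the vanishing of the odd-indexed $f_k$ (in particular $f_1$) via evenness in $z$ of $\log R$ and $\log(\sin(\pi z)/(\pi z))$.
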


\begin{rmk}
\begin{enumerate}
\item Since \(\mathcal{R}_{2k}(q)\) has no constant term in its \(q\)-expansion,
\eqref{eq:tr-r-f} immediately gives
\[
\lim_{\tau\to i\infty} f_{k}(\tau)=-\frac{B_{k}}{2k},
\]
as in \cite[Theorem~1.2(1)]{bringmannMock}. 
\item The corollary shows that
\[
	\mathbb{Q}[\mathcal{R}_2, \mathcal{R}_4,\ldots,G_2,G_4,\ldots]
	=
	\mathbb{Q}[f_2,f_4,\ldots,G_2,G_4, \ldots ].
\]
Therefore, as proved in \cite[Theorem~1.2(3)]{bringmannMock}, 
\[
	\mathbb{Q}[f_2,f_4,\ldots,G_2,G_4,\ldots ]
\]
is closed under the operator \(q\frac{d}{dq}\).
\end{enumerate}
\end{rmk}

We also obtain an alternative expression for partition rank moments by applying Fa\` a di Bruno's formula term by term. This gives a truncated divisor-sum analogue of the crank moment formula.

\begin{prop} \label{prop:finiteLambert}
For \(n, j \geq 1\), let
\[
	\sigma_{j}^{[n]}(m)
	:=
	\sum_{\substack{d\mid m\\ d\leq n}} d^{j}
\]
and
\[
	D_{j}^{[n]}(q)
	:=
	\sum_{m\geq 1}\sigma_{j-1}^{[n]}(m)q^m.
\]
Then, for every integer \(k\geq 1\), we have
\begin{equation} \label{eq:finite-Lambert-series}
	R_{2k}(q)
	=
	\sum_{n\geq 1}
	\frac{q^{n^2}}{(q)_n^2}
	\mathrm{Tr}_{k}\bigl(
	\phi_M;
	D_2^{[n]}, D_4^{[n]}, \ldots , D_{2k}^{[n]}
	\bigr).
\end{equation}
\end{prop}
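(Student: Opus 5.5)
Our plan is to work term by term in the rank generating function \eqref{eq:intro-rank-gf}. Write
\[
R(\zeta;q)=\sum_{n\ge0}\frac{q^{n^2}}{(\zeta q)_n(\zeta^{-1}q)_n}
=\sum_{n\ge0}\frac{q^{n^2}}{(q)_n^2}\,h_n(z),
\qquad
h_n(z):=\frac{(q)_n^2}{(\zeta q)_n(\zeta^{-1}q)_n},
\]
so that $h_n(0)=1$. The $n=0$ term is the constant $1$, so it contributes nothing to $R_{2k}$ for $k\ge1$; this is why the sum in \eqref{eq:finite-Lambert-series} starts at $n=1$. By \eqref{eq:intro-moment-operator}, $R_{2k}(q)=(2\pi i)^{-2k}\,\partial_z^{2k}R|_{z=0}$. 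Since each summand is a power series in $q$ with coefficients polynomial in $\zeta^{\pm1}$, and only finitely many $n$ contribute to each $q$-power, we may differentiate term by term. It therefore suffices to show that
\[
(2\pi i)^{-2k}\,h_n^{(2k)}(0)=\mathrm{Tr}_k\bigl(\phi_M;D_2^{[n]},\ldots,D_{2k}^{[n]}\bigr).
\]

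To compute this, we apply \eqref{eq:Faa-exp-log} to $h_n$, exactly as in the second proof of Theorem~\ref{thm:main}. Because $h_n(0)=1$, this gives $h_n^{(r)}(0)=B_r(L_1,\ldots,L_r)$ with $L_j=(\log h_n)^{(j)}(0)$. We then compute $\log h_n$ as in the largest-part computation, now with a finite product:
\[
\log h_n=\sum_{d=1}^{n}\bigl(2\log(1-q^d)-\log(1-\zeta q^d)-\log(1-\zeta^{-1}q^d)\bigr)
=\sum_{d=1}^n\sum_{m\ge1}\frac{\zeta^m+\zeta^{-m}-2}{m}q^{dm}.
\]
Expanding $\zeta^{\pm m}=e^{\pm2\pi imz}$, odd powers of $z$ cancel, and the coefficient of $(2\pi iz)^{2j}/(2j)!$ is
\[
2\sum_{d\le n}\sum_{m\ge1}m^{2j-1}q^{dm}=2\sum_{N\ge1}\sigma_{2j-1}^{[n]}(N)q^N=2D_{2j}^{[n]}(q),
\]
where the restriction $d\le n$ is on the divisor complementary to $m$. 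Hence $L_{2j}=2(2\pi i)^{2j}D_{2j}^{[n]}$ and $L_{2j-1}=0$.

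It remains to run the even-part reduction from the first proof of Theorem~\ref{thm:main}. In $B_{2k}(L_1,\ldots,L_{2k})=\sum_{\lambda\vdash 2k}\phi_B(\lambda)L_\lambda$, only partitions with all parts even survive. The powers of $2\pi i$ total $(2\pi i)^{2k}$, and the factor $2^{m_j}$ from $L_{2j}=2(2\pi i)^{2j}D^{[n]}_{2j}$ matches the factor $2^{m_j}$ in $\phi_M$ from \eqref{eq:phi_M}. After halving each part, this gives exactly $(2\pi i)^{2k}\mathrm{Tr}_k(\phi_M;D_2^{[n]},\ldots,D_{2k}^{[n]})$, which is what we needed.

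The main obstacle is the bookkeeping in the divisor identification. In $\sigma^{[n]}$ the truncation $d\le n$ must fall on the divisor indexing the product factor $(1-\zeta q^d)$, not on the divisor raised to the power $2j-1$. Checking that $\sum_{d\le n}\sum_m m^{2j-1}q^{dm}$ matches the stated definition of $D_{2j}^{[n]}$ is therefore the step to verify carefully.
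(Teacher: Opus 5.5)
Your argument follows the paper's proof: term-by-term differentiation of the rank generating function, the exponential case of Fa\`a di Bruno's formula for each finite product, $L_{2j-1}=0$ and $L_{2j}=2(2\pi i)^{2j}\sum_{d\le n}\sum_{m\ge1}m^{2j-1}q^{dm}$, and the even-part reduction that produces $\phi_M$. Your normalization $h_n(0)=1$, versus the paper's $h_n(0)=q^{n^2}/(q)_n^2$, is cosmetic. Every step up to the identification with $D_{2j}^{[n]}$ is correct.

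The step you flagged does fail, however, for the definition as stated. With $\sigma_j^{[n]}(N)=\sum_{d\mid N,\, d\le n}d^j$, the truncation falls on the divisor that carries the power, so $\sum_N\sigma_{2j-1}^{[n]}(N)q^N=\sum_{d\le n}\sum_{m\ge1}d^{2j-1}q^{dm}$. This is not $\sum_{d\le n}\sum_{m\ge1}m^{2j-1}q^{dm}$: already for $n=j=1$ you get $\sum_N q^N$ versus $\sum_m mq^m$.

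No argument can close this gap, because the proposition as literally stated is false. Take $k=1$. Below $q^5$ only the $n=1$ term contributes, and it equals $2q^2/(1-q)^3=2q^2+6q^3+\cdots$. On the other hand $R_2(q)=2q^2+8q^3+20q^4+\cdots$, since the partitions of $3$ have ranks $2,0,-2$. The paper's proof makes the same unjustified identification.

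What your computation actually proves is the corrected statement with $\sigma_j^{[n]}(N):=\sum_{d\mid N,\; N/d\le n}d^j$, i.e. $D_j^{[n]}(q)=\sum_{i=1}^n\sum_{m\ge1}m^{j-1}q^{im}$. With this definition the $n=1$ term is $2q^2/(1-q)^4=2q^2+8q^3+20q^4+40q^5+\cdots$. The $n=2$ term adds $2q^5$, so the series matches $R_2$ through $q^5$, where the coefficient is $42$. State the definition this way and your proof is complete.
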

\begin{proof}
We define
\[ 
	h_n(\zeta;q) := \frac{q^{n^2}}{(\zeta q)_n(\zeta^{-1}q)_n}, 
\]

so that $R(\zeta;q) = \sum_{n \geq 0} h_n$ with $h_0 = 1$. Taking the logarithmic derivative gives
\[
	\left(\frac{d}{dz}\right)^k \log h_n
	= (2\pi i)^k \sum_{i=1}^n \sum_{m=1}^\infty
	\left(m^{k-1}(\zeta q^i)^m - (-m)^{k-1}(\zeta^{-1}q^i)^m\right),
\]
which implies that 
\begin{equation*}
	L_k^{[n]} :=\left.\left(\frac{d}{dz}\right)^k \log h_n\right|_{z=0}
	=
	\begin{cases}
		0, & \text{if } k \text{ is odd},\\[4pt]
		2(2\pi i)^k D_{k}^{[n]} , & \text{if } k \text{ is even}.
	\end{cases}
\end{equation*}
Applying \eqref{eq:Faa-exp-log} with $h = h_n$, evaluating at \(z=0\),
and summing over $n \geq 1$, we obtain
\begin{equation*}
	(2\pi i)^{2k} R_{2k}(q)
	= \sum_{n \geq 1} \frac{q^{n^2}}{(q)_n^2}\,
	\mathrm{Tr}_{2k}\!\left(\phi_B; L_1^{[n]}, L_2^{[n]}, \ldots, L_{2k}^{[n]}\right).
\end{equation*}
Since \(L_{2j-1}^{[n]}=0\), only partitions of \(2k\) into even parts
contribute. Reducing to partitions of \(k\) and cancelling
\((2\pi i)^{2k}\) gives \eqref{eq:finite-Lambert-series}.
\end{proof}


\subsection{Unimodal rank moment}
A weakly unimodal sequence of weight \(n\) is a sequence
\[
	a_1 \le a_2 \le \cdots \le a_r \le \overline{c}
	\ge b_1 \ge b_2 \ge \cdots \ge b_s
\]
such that
\[
	\sum_{i=1}^{r} a_i+\sum_{j=1}^{s} b_j+c=n.
\]
Its rank is defined to be \(r-s\). Here the overline on \(c\) is used to
distinguish the peak from the other parts. Let \(u(m,n)\) be the number of
weakly unimodal sequences of weight \(n\) and rank \(m\). Then the two-variable
generating function is given by

\[
	U (\zeta, q) := \sum_{n \ge 0} \sum_{m \in \mathbb{Z}} u(m,n) \zeta^m q^n = \sum_{n \geq 0} \frac{q^n}{(\zeta q)_n (q/\zeta)_n}.
\]
The specialization \(U(1;q)\) satisfies 
\[
	U(1;q)
	=
	\sum_{n\geq 0}
	\frac{q^n}{(q)_n^2}
	=
	\frac{1}{(q)_\infty^2}
	\sum_{n\geq 0}(-1)^n q^{n(n+1)/2},
\]
where the last sum is a partial theta series \cite{War}.
There is also a Hecke--Appell type expression \cite[eq.~(2.5)]{KimLovejoy2014}
\[
	U(\zeta;q)
	=
	\frac{1-\zeta}{(q)_\infty^2}
	\left(
	\sum_{r,s\geq 0}
	-
	\sum_{r,s<0}
	\right)
	\frac{
	(-1)^{r+s}
	q^{\frac{r^2}{2}+2rs+\frac{s^2}{2}+\frac{3r}{2}+\frac{s}{2}}
	}
	{1-\zeta q^r}.
\]

We define the \(k\)-th unimodal rank moment generating function by
\[
	U_k(q)
	:=
	\left.
	\left(\zeta\frac{d}{d\zeta}\right)^k
	U(\zeta;q)
	\right|_{\zeta=1}.
\]
By symmetry, \(U_k(q)=0\) for odd \(k\). While the crank and rank moments of
ordinary partitions are closely related to Eisenstein series and mock
Eisenstein series, respectively, the rank moments of unimodal sequences are
related to certain Eisenstein-type series, called false Eisenstein series and
partial Eisenstein series introduced in \cite{bringmann2026false}. In that work, the
functions \(u_k(\tau)\) are defined by
\[
	U(\zeta;q)
	=
	\frac{\sin(\pi z)}{\pi z}U(1;q)
	\exp\left(
	2\sum_{k\geq 1}u_k(\tau)
	\frac{(2\pi iz)^k}{k!}
	\right),
	\qquad \zeta=e^{2\pi iz}.
\]

The modularity properties of \(u_k\) are given in
\cite[Corollary~1.8]{bringmann2026false}. As in the previous subsections,
Theorem~\ref{thm:main} gives explicit relations between the unimodal rank
moments and the functions \(u_{2k}\).

\begin{cor}\label{cor:unimodal_rank}
For every integer  \(k, j \geq 1\), we set
\begin{align*}
	\mathcal{U}_{2k}(q) &:=\frac{U_{2k}(q)}{U(1;q)} \\
	W_{2j}(\tau)
	&:=
	u_{2j}(\tau)+\frac{B_{2j}}{4j}.
\end{align*}
Then
\begin{align}
	\mathcal{U}_{2k}(q)
	&=
	\mathrm{Tr}_{k}
	\bigl(\phi_{M}; W_2,W_4,\ldots,W_{2k}\bigr)
	\label{eq:unimodal-rank-trace-form}
	\\
\intertext{and}
	W_{2k}(\tau)
	&=
	\mathrm{Tr}_{k}\left(
	\phi_{S};
	\mathcal{U}_2(q),\mathcal{U}_4(q),\ldots,\mathcal{U}_{2k}(q)
	\right).
	\label{eq:tr-U-u}
\end{align}
\end{cor}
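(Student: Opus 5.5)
This corollary is a direct specialization of Theorem~\ref{thm:main}. Take the statistic to be the rank of weakly unimodal sequences, so that $G_S(\zeta;q)=U(\zeta;q)$, $S(m,n)=u(m,n)$, $G_S(1;q)=U(1;q)$, and $M_r(q)=U_r(q)$. Then the normalized moments are $\mathcal{M}_{2k}(q)=U_{2k}(q)/U(1;q)=\mathcal{U}_{2k}(q)$.

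The first step is to check the symmetry hypothesis \eqref{eq:stat_symmetry}, namely $u(m,n)=u(-m,n)$. This is immediate from the generating function: $\sum_{n\ge0} q^n/((\zeta q)_n(q/\zeta)_n)$ is invariant under $\zeta\mapsto\zeta^{-1}$. Combinatorially, it also follows by reflecting a unimodal sequence about its peak, which swaps $r$ and $s$.

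The second step is to match the exponential expansion \eqref{eq:intro-H-exp}. The defining expansion of $u_k$ has exactly that shape, except that the sum starts at $k=1$ rather than $k=2$. So I need $u_1=0$, or more generally $u_k=0$ for odd $k$. I would argue this as in the first proof of Theorem~\ref{thm:main}. With $A_j=2u_j+B_j/j$, the quantity $\log(U(\zeta;q)/U(1;q))$ is an even function of $z$, because of the symmetry. Also $\log(\sin(\pi z)/(\pi z))$ is even. Hence $\sum_k 2u_k(2\pi i z)^k/k!$ is even in $z$, so $u_k=0$ for all odd $k$. Here I use $B_1$-free terms only: the $\log\sin$ expansion starts at $n=2$, and $B_n=0$ for odd $n\ge3$.

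With this in hand, the hypothesis \eqref{eq:intro-H-exp} holds with $H_n=u_n$ for $n\ge2$. Theorem~\ref{thm:main} then gives
\[
\mathcal{U}_{2k}=\mathrm{Tr}_k\bigl(\phi_M;u_2+\tfrac{B_2}{4},\ldots,u_{2k}+\tfrac{B_{2k}}{4k}\bigr)=\mathrm{Tr}_k(\phi_M;W_2,\ldots,W_{2k}),
\]
and
\[
u_{2k}=-\tfrac{B_{2k}}{4k}+\mathrm{Tr}_k(\phi_S;\mathcal{U}_2,\ldots,\mathcal{U}_{2k}).
\]
Moving the Bernoulli term to the left-hand side in the second identity gives \eqref{eq:tr-U-u}.

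The only point needing care is the vanishing of the odd $u_k$ (in particular $u_1$), so that the expansion genuinely fits the form \eqref{eq:intro-H-exp}. The evenness argument above handles this. Everything else is a substitution into Theorem~\ref{thm:main}.
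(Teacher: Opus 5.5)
Your proposal is correct and follows the paper's route: the corollary is obtained by specializing Theorem~\ref{thm:main} to $G_S=U$, with $H_{2k}=u_{2k}$ and $\mathcal{M}_{2k}=\mathcal{U}_{2k}$. Your explicit evenness-in-$z$ argument showing that the odd $u_k$ vanish (in particular $u_1$, so the expansion fits the hypothesis \eqref{eq:intro-H-exp}) is a detail the paper leaves implicit, and you handle it correctly.
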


\section{Applications} \label{sec:app}
We present some applications of the identities obtained above.

\subsection{Congruences}

By Corollary~\ref{cor:ptn_crank}, we have
\[
	C_2(q)
	=
	\frac{2}{(q)_\infty}D_2(q)
	=
	\frac{2}{(q)_\infty}
	\sum_{n\ge1}\sigma_1(n)q^n.
\]

On the other hand,
\[
q\frac{d}{dq}
\left(
\frac{1}{(q)_\infty}
\right)
=
\frac{1}{(q)_\infty}
\sum_{n\ge1}\sigma_1(n)q^n.
\]
Therefore
\[
	C_2(q)
	=
	2q\frac{d}{dq}
	\left(
	\frac{1}{(q)_\infty}
	\right),
\]
which implies that
\[
	c_2(n)=2np(n),
\]
where $C_2 (q) = \sum_{n \ge 1} c_2 (n) q^n$. This identity goes back to Dyson, who gave a combinatorial proof (see also
\cite[(1.27)]{AtkinGarvan2003}). It follows immediately that, for every prime \(p\),
\[
	c_2(pn)\equiv0\pmod p.
\]
Moreover, Ramanujan's congruences for \(p(n)\) imply
\begin{align*}
	c_2(5n+4) &\equiv0\pmod5, \\
	c_2(7n+5) &\equiv0\pmod7, \\
	c_2(11n+6)& \equiv0\pmod{11}.
\end{align*}
More generally, the existence theorems of Ono \cite{Ono2000} and Ahlgren--Ono \cite{AhlgrenOno2001}  for partition congruences
on arithmetic progressions yield corresponding existence results
for congruences of \(c_2(n)\).

We next use the inverse relation to recover the Eisenstein coefficient from
the second crank moment. From the preceding identity or  from
Corollary~\ref{cor:ptn_crank}, we have
\[
	\sum_{n\ge1}\sigma_1(n)q^n
	=
	\frac12 (q)_\infty C_2(q).
\]
By Euler's pentagonal number theorem, that is,
\[
	(q)_\infty
	=
	\sum_{r\in\mathbb Z}
	(-1)^r q^{r(3r-1)/2},
\]
we derive that
\[
	\sum_{n\ge1}\sigma_1(n)q^n
	=
	\frac12
	\left(
	\sum_{r\in\mathbb Z}
	(-1)^r q^{r(3r-1)/2}
	\right)
	\left(
	\sum_{m\ge0}c_2(m)q^m
	\right).
\]

Comparing coefficients gives
\[
\sigma_1(N)
=
\frac12
\sum_{r\in\mathbb Z}
(-1)^r
c_2\left(
N-\frac{r(3r-1)}2
\right),
\]
where we set \(c_2(m)=0\) for \(m<0\).

In particular, congruences for \(c_2(n)\) yield congruences for pentagonal
convolutions of the Eisenstein coefficients. For example, since
\[
c_2(5n+4)\equiv0\pmod5,
\]
we obtain
\[
\sigma_1(N)
\equiv
3
\sum_{\substack{r\in\mathbb Z\\
N-\frac{r(3r-1)}2\not\equiv 4\pmod5}}
(-1)^r
c_2\left(
N-\frac{r(3r-1)}2
\right)
\pmod5.
\]
Thus the inverse relation converts congruences for
crank moments into pentagonal-number convolution congruences for Eisenstein
coefficients.

The same argument applies to rank moments. In that case, the Fourier
coefficients of the mock Eisenstein series \(f_{2k} (\tau) \) can be expressed as
pentagonal-number convolutions involving rank moments. Higher moments can be
treated similarly, although the resulting formulas become increasingly
complicated.

We can also obtain a congruence application for unimodal rank moments. 
Kim and Lovejoy \cite{KimLovejoy2014} proved the second rank moment congruence
\[
	[q^{7n+6}]U_2 (q) \equiv 0 \pmod 7,
\]
where $[q^n] f(q)$ is the coefficient of $q^n$ in the $q$-expansion of $f(q)$. On the other hand, Corollary~\ref{cor:unimodal_rank} gives
\[
	U_2(q)=2 U(1;q) W_2(q),
	\qquad
	W_2(q)=u_2(\tau)+\frac{1}{24}.
\]
Therefore, if
\[
	U(1;q)=\sum_{n\geq0} u(n) q^n,
	\qquad
	W_2 (q)=\sum_{n\geq0}w_2 (n)q^n,
\]
then
\[
	\sum_{j=0}^{7n+6} u(j)w_2(7n+6-j)
	\equiv 0 \pmod 7.
\]
Thus the known congruence for the second unimodal rank moment gives a
modulo \(7\) convolution congruence involving the coefficient of the
corresponding false or partial Eisenstein series.


\subsection{Partition numbers and Bell polynomials}
We can express the partition number \(p(n)\) in terms of divisor sums by using
complete Bell polynomials, and conversely recover the divisor sums from the
partition numbers. We start with
\begin{equation}
q\frac{d}{dq}
\log\left( \frac{1}{(q)_\infty } \right)
=
\sum_{n\ge1}\sigma_1(n)q^n .
\label{eq:logdiffp}
\end{equation}
Integrating \eqref{eq:logdiffp} gives
\[
\sum_{n\ge0}p(n)q^n
=
\exp\left(
\sum_{n\ge1}\frac{\sigma_1(n)}{n}q^n
\right)
=
\exp\left(
\sum_{n\ge1}(n-1)!\sigma_1(n)\frac{q^n}{n!}
\right).
\]
Hence, by the defining relation for the complete Bell polynomials,
\[
p(n)
=
\frac{1}{n!}
B_n\bigl(
0!\sigma_1(1),
1!\sigma_1(2),
\ldots,
(n-1)!\sigma_1(n)
\bigr).
\]
Equivalently,
\[
p(n)
=
\sum_{\lambda=(1^{m_1},\ldots,n^{m_n})\vdash n}
\prod_{j=1}^n
\frac{1}{m_j!}
\left(\frac{\sigma_1(j)}{j}\right)^{m_j}.
\]
Conversely, taking the logarithm gives the inverse relation
\[
\sigma_1(n)
=
n
\sum_{\lambda=(1^{m_1},\ldots,n^{m_n})\vdash n}
\mu(\lambda)
\prod_{j=1}^n
\frac{p(j)^{m_j}}{m_j!}.
\]
As an illustration, the case $n=4$ yields:
\begin{align*}
5 = p(4) &= \frac{\sigma_1 (4)}{4} +  \frac{\sigma_1 (3)}{3} \cdot \frac{\sigma_1 (1)}{1} + \frac{1}{2!} \left( \frac{\sigma_1 (2)}{2} \right)^2 \\
&\qquad + \frac{\sigma_1(2)}{2} \cdot \frac{1}{2!} \left(\frac{\sigma_1(1)}{1} \right)^2+ \frac{1}{4!} \left( \frac{\sigma_1 (1)}{1} \right)^4, \\
\frac{7}{4} = \frac{\sigma_1 (4)}{4} &= \frac{p(4)}{1} - \frac{p(3)}{1}\cdot\frac{p(1)}{1} - \frac{p(2)^2}{2!} + 2! \frac{p(2)}{1}\cdot \frac{p(1)^2}{2!} - 3! \frac{p(1)^4}{4!}.
\end{align*}

\section{Concluding Remarks} \label{sec:con_rmk}
Our approach can also be applied to other combinatorial statistics. As a
possible further direction, we consider the rank for partitions into distinct
parts. Let \(q(n)\) denote the number of partitions of \(n\) into distinct parts. Its
generating function is
\[
Q(q):=\sum_{n\ge0} q(n)q^n
=
(-q)_\infty
=
q^{-1/24}\frac{\eta(2\tau)}{\eta(\tau)}
=
1+\sum_{n\ge1}\frac{q^{n(n+1)/2}}{(q)_n}.
\]
Thus up to $q^{-1/24}$, \(Q(q)\) is an eta-quotient of weight \(0\), and hence has modular transformation properties on a congruence subgroup.

For \(m\in\mathbb{Z}\) and \(n\ge0\), let \(Q(m,n)\) denote the number of partitions of \(n\) into distinct parts with rank \(m\). Then the generating function is
\begin{equation}
H(\zeta;q)
:=
\sum_{\substack{m\in\mathbb{Z}\\ n\ge0}} Q(m,n)\zeta^m q^n
=
\sum_{n\ge0}\frac{q^{n(n+1)/2}}{(\zeta q)_n}
=
1+\sum_{m\ge1}(\zeta q)^m(-\zeta^{-1}q)_{m-1}.
\label{eq:distinct-rank-gf}
\end{equation}

At \(\zeta=-1\), we have
\begin{equation}
H(-1;q)
=
1+\sum_{n\ge 1}\frac{q^{n(n+1)/2}}{(-q)_n}
=
\sigma(q),
\label{eq:distinct-rank-minus-one}
\end{equation}
which is Ramanujan's \(\sigma\)-function. 
Andrews, Dyson, and Hickerson \cite{AndrewsDysonHickerson1988} showed that infinitely many of the coefficients of \(\sigma(q)\) vanish, while the coefficients are unbounded, by relating them to a Hecke character of the real quadratic field \(\mathbb{Q}(\sqrt{6})\). 
Cohen \cite{Cohen1988} subsequently constructed a Maass waveform from \(\sigma(q)\) and its companion \(\sigma^*(q)\). 
These functions are now understood to be closely related to quantum modular forms. 
Zwegers \cite{Zwegers2012} later placed such examples into the broader framework of mock Maass theta functions, namely, certain non-modular eigenfunctions of the weight \(0\) hyperbolic Laplacian that can be completed to non-holomorphic modular forms of weight 0.

As before, define the distinct-rank moment generating functions by
\begin{equation}
H_k^{\mathrm{dist}}(q)
:=
\sum_{n\ge0}\sum_{m\in\mathbb Z} m^k Q(m,n)q^n
=
\left.
\left(\zeta\frac{d}{d\zeta}\right)^k H(\zeta;q)
\right|_{\zeta=1}.
\label{eq:distinct-rank-moment}
\end{equation}
We may also define functions \(h_k(\tau)\) by
\begin{equation}
H(\zeta;q)
=:
(-q)_\infty
\exp\left(
\sum_{n\geq 1}h_n(\tau)\frac{(2\pi iz)^n}{n!}
\right),
\qquad
\zeta=e^{2\pi iz}.
\label{eq:intro-distinct-h}
\end{equation}
Then Proposition~\ref{prop:gen_stat_bell} gives
\begin{align*}
	\mathcal{H}_r(q)
	:=
	\frac{H_r^{\mathrm{dist}}(q)}{(-q)_\infty}
	&=
	\sum_{\lambda\vdash r}\phi_B(\lambda)h_\lambda, \\
	h_k(\tau)
	&=
	\sum_{\lambda\vdash k}
	\mu(\lambda)\phi_B(\lambda)\mathcal{H}_\lambda.
\end{align*}
In view of the connection with Ramanujan's \(\sigma\)-function, it would be
interesting to understand whether the functions \(h_k(\tau)\) exhibit some
modularity-type behavior.

 It would also be interesting to use the results of this paper to develop
analogous constructions for other partition classes, such as overpartitions,
\(\ell\)-regular partitions, and multi-colored partitions. Another natural
direction is to apply the Bell-polynomial relations developed here to
symmetrized rank and crank moments, as well as to congruences for variants of
unimodal sequences studied in \cite{BringmannLovejoy2025}, \cite{BrysonOnoPitmanRhoades2012}, 
\cite{ChenGarvan2022}, and \cite{KimLimLovejoy2016}.

\section*{Acknowledgment}
Soon-Yi Kang and Chanhee Lee were supported by Basic Science Research Program through the National Research Foundation of Korea (NRF) funded by the Ministry of Education (RS-2025-25415913).
Byungchan Kim was supported by the Basic Science Research Program through the National Research Foundation of Korea (NRF) funded by the Ministry of Science and ICT (RS-2025-16065347).
Chanhee Lee was supported by Basic Science Research Program through the National Research Foundation of Korea (NRF) funded by the Ministry of Education (RS-2025-25433719).


\begin{thebibliography}{99}
\bibitem{AS1972}
M. Abramowitz and I. A. Stegun,
\emph{Handbook of Mathematical Functions with Formulas, Graphs, and Mathematical Tables},
Dover Publications, 1972.


\bibitem{AhlgrenOno2001}
S. Ahlgren and K. Ono, 
\emph{Congruence properties for the partition function},
Proc. Natl. Acad. Sci. USA {\bf 98} (2001), no. 23, 12882--12884.




\bibitem{TracepartitionOno2025}
T. Amdeberhan, M. Griffin, K. Ono, and A. Singh,
\emph{Traces of partition Eisenstein series},
Forum Math. {\bf 37} (2025), no.~6, 1417--1441.


\bibitem{AndrewsDysonHickerson1988}
G. E. Andrews, F. J. Dyson, and D. Hickerson,
\emph{Partitions and indefinite quadratic forms},
Invent. Math. {\bf 91} (1988), no. 3, 391--407.


\bibitem{AG1988}
G. E. Andrews and F. G. Garvan,
\emph{Dyson's crank of a partition},
Bull. Amer. Math. Soc. {\bf 18} (1988), no.~2, 167--171.

\bibitem{AtkinGarvan2003}
A. O. L. Atkin and F. G. Garvan,
\emph{Relations between the ranks and cranks of partitions},
in Number Theory and Modular Forms: Papers in Memory of Robert A. Rankin,
pp.~343--366, Kluwer Academic Publishers, 2003.

\bibitem{atkin-swinner}
A. O. L. Atkin and P. Swinnerton-Dyer,
\emph{Some properties of partitions},
Proc. London Math. Soc. {\bf 4} (1954), 84--106.


\bibitem{BringmannGarvanMahlburg2009}
K. Bringmann, F. G. Garvan, and K. Mahlburg,
\emph{Partition statistics and quasiharmonic Maass forms},
Inter. Math. Res. Notices, (2009), no.~1, 63--97.



\bibitem{BringmannLovejoy2025}
K. Bringmann and J. Lovejoy,
\emph{Odd unimodal sequences},
Adv. Math. {\bf 480} (2025), Paper No.~110436.


\bibitem{BringmannMahlburgRhoades2014}
K. Bringmann, K. Mahlburg, and R. C. Rhoades,
\emph{Taylor coefficients of mock-Jacobi forms and moments of partition statistics},
Math. Proc. Camb. Phil. Soc. {\bf 157} (2014), no.~2, 231--251.

\bibitem{bringmannMock}
K. Bringmann, B. V. Pandey, and J.-W. van Ittersum,
\emph{Eisenstein-type series associated to partition ranks},
arXiv preprint, arXiv:2504.07713, 2025.

\bibitem{bringmann2026false}
K. Bringmann, B. V. Pandey, and J.-W. van Ittersum,
\emph{False and partial Eisenstein series related to unimodal sequences},
arXiv preprint, arXiv:2601.19441, 2026.

\bibitem{BrysonOnoPitmanRhoades2012}
J. Bryson, K. Ono, S. Pitman, and R. C. Rhoades,
\emph{Unimodal sequences and quantum and mock modular forms},
Proc. Natl. Acad. Sci. USA {\bf 109} (2012), no.~40, 16063--16067.


\bibitem{ChenGarvan2022}
R. Chen and F. G. Garvan,
\emph{A proof of the mod 4 unimodal sequence conjectures and related mock theta functions},
Adv. Math. {\bf 398} (2022), Paper No.~108235.


\bibitem{Cohen1988}
H. Cohen,
\emph{\(q\)-identities for Maass waveforms},
Invent. Math. {\bf 91} (1988), no. 3, 409--422.

\bibitem{dyson}
F. J. Dyson,
\emph{Some guesses in the theory of partitions},
Eureka {\bf 8} (1944), 10--15.

\bibitem{Garvan}
F. G. Garvan,
\emph{New combinatorial interpretations of Ramanujan's partition congruences mod 5, 7 and 11},
Trans. Amer. Math. Soc. {\bf 305} (1988), no.~1, 47--77.

\bibitem{Kang2009}
S.-Y. Kang,
\emph{Mock Jacobi forms in basic hypergeometric series},
Compositio Math. {\bf 145} (2009), no.~3, 553--565.


\bibitem{KimLimLovejoy2016}
B. Kim, S. Lim, and J. Lovejoy,
\emph{Odd-balanced unimodal sequences and related functions: parity, mock modularity and quantum modularity},
Proc. Amer. Math. Soc. {\bf 144} (2016), no.~9, 3687--3700.

\bibitem{KimLovejoy2014}
B. Kim and J. Lovejoy,
\emph{The rank of a unimodal sequence and a partial theta identity of Ramanujan},
Int. J. Number Theory {\bf 10} (2014), no.~4, 1081--1098.

\bibitem{matsusaka2025applications}
T. Matsusaka,
\emph{Applications of Fa{\`a} di Bruno's formula to partition traces},
Res. Number Theory \textbf{11} (2025), no.~3, Article~69.


\bibitem{Ono2000}
K. Ono,
\emph{Distribution of the partition function modulo $m$},
Ann. of Math. (2) {\bf 151} (2000), no. 1, 293--307.


\bibitem{Rhoades2013}
R. C. Rhoades,
\emph{Families of quasimodular forms and Jacobi forms: The crank statistic for partitions},
Proc. Amer. Math. Soc. {\bf 141} (2013), no.~1, 29--42.


\bibitem{StanleyEC1}
R. P. Stanley,
\emph{Enumerative Combinatorics, Vol.~1},
2nd ed., Cambridge Univ. Press, 2011.

\bibitem{Speed1983}
T. P. Speed,
\emph{Cumulants and partition lattices},
Austral. J. Statist. {\bf 25} (1983), 378--388.




\bibitem{War}
S.~O.~Warnaar, \emph{Partial theta functions}, 
Srinivasa Ramanujan: His Life, Legacy, and Mathematical Influence, K. Alladi et al. (eds.), to appear.



\bibitem{Zwegers2012}
S. P. Zwegers,
\emph{Mock Maass theta functions},
Q. J. Math {\bf 63} (2012), 753–770.




\end{thebibliography}
\end{document}